\newcommand{\leqnomode}{\tagsleft@true}
\newcommand{\reqnomode}{\tagsleft@false}
\numberwithin{equation}{section}
\newtheorem{theorem}{Theorem}[section]
\newtheorem{lemma}[theorem]{Lemma}
\newtheorem{corollary}[theorem]{Corollary}
\newtheorem{proposition}[theorem]{Proposition}
\newtheorem{remark}[theorem]{Remark}
\newtheorem{definition}[theorem]{Definition}
\providecommand{\ln}{\mathop{\rm ln}\nolimits}
\numberwithin{equation}{section}
\newcommand{\R}{\mathbb R}
\title[Regularity results for quasilinear elliptic problems]{Regularity results for quasilinear elliptic problems driven by the fractional $\Phi$-Laplacian operator}
\author[M.L. Carvalho]{M. L. Carvalho}
\author[E. D. Silva]{E. D. Silva}
\author[J.C. \ de Albuquerque]{J. C. de Albuquerque}
\author[S. Bahrouni]{Sabri Bahrouni}
\address[M.L. Carvalho]{Department of Mathematics, Federal University of Goi\'{a}s
	Federal University of Goi\'{a}s
	\newline\indent
	74001-970, Goi\'{a}s-GO, Brazil}
\email{\href{marcos_leandro_carvalho@ufg.br}{marcos$\_$leandro$\_$carvalho@ufg.br}}
\address[E.D. Silva]{Department of Mathematics, Federal University of Goi\'{a}s}
\email{\href{mailto: edcarlos@ufg.br}{edcarlos@ufg.br}}
\address[J.C. de~Albuquerque]{Department of Mathematics, Federal University of Goi\'{a}s}
\email{\href{mailto: josecarlos.melojunior@ufpe.br}{josecarlos.melojunior@ufpe.br}}
\address[S. Bahrouni]{Mathematics Department, Faculty of Sciences, University of Monastir, 5019 Monastir, Tunisia}
\email{\href{mailto: sabribahrouni@gmail.com}{sabribahrouni@gmail.com}}
\thanks{Corresponding author: E. D. Silva}
\thanks{Research supported in part by INCTmat/MCT/Brazil, CNPq and CAPES/Brazil. The second author was partially supported by CNPq and with grants 309026/2020-2 and 429955/2018-9.}
\subjclass[2010]{35B65,35B09,35D30}
\keywords{Regularity results, Quasilinear elliptic problems, Moser iteration, Nonhomogeneous operators.}
\begin{document}
	
	
	\begin{abstract}
		It is established $L^{p}$ estimates for the fractional $\Phi$-Laplacian operator defined in bounded domains where the nonlinearity is subcritical or critical in a suitable sense. Furthermore, using some fine estimates together with the Moser's iteration, we prove that any weak solution for fractional $\Phi$-Laplacian operator defined in bounded domains belongs to $L^\infty(\Omega)$ under appropriate hypotheses on the $N$-function $\Phi$. Using the Orlicz space and taking into account the fractional setting for our problem the main results are stated for a huge class of nonlinear operators and nonlinearities.  
	\end{abstract}
	
	\maketitle
	
	\section{Introduction and main results}
	
	\subsection{Fractional Orlicz Sobolev spaces: an overview}
	
	Quasilinear elliptic problems using Orlicz and Orlicz-Sobolev setting appeared in \cite{orlicz1}. Around 1991 Kov\'{a}\^{c}ik and R\'{a}kosn\'{\i}k published a standard reference for basic properties on Orlicz spaces \cite{KR,gossez1,gossez2}. It is important to emphasize that some generalizations of the classical Sobolev spaces could be considered. Firstly, by using Orlicz-Sobolev spaces, a systematic study of these spaces was initiated in \cite{Adams1, Donaldson, DT} in connection with the analysis of nonlinear partial differential equations without a polynomial growth. Now we turn our attention to the nonlocal problems, which received great attention in the last decades, a fundamental tool to treat these type of problems is the so-called fractional order Sobolev spaces.  Several definitions of fractional Sobolev have been proposed in the literature in particular the fractional Orlicz-Sobolev spaces $W^{s,\Phi}(\Omega)$, see for instance \cite{ABS, BS20, Sabri1, Sabri2, FBS, DNFBS}.

	In the present work, given $N$-function $\Phi$ (see Definition \ref{Musielak function}) we consider the fractional Orlicz Sobolev spaces defined as
	\begin{eqnarray}\label{W}
		W^{s,\Phi}(\mathbb{R}^N)=\left\{u\in L^{\Phi}(\mathbb{R}^N): \mathcal{J}_{s,\Phi}(u)<\infty\right\},
	\end{eqnarray}
	where the usual Orlicz space $L^{\Phi}$ is defined as
	$$
	L^{\Phi}(\mathbb{R}^N) = \left\{u: \mathbb{R}^N\to \mathbb{R}\ \text{measurable, such that}\ \mathcal{J}_{\Phi}(u)<\infty\right\}
	$$
	where 
	\begin{eqnarray}\label{bar-phi}
		\Phi(t):=\int_0^t\varphi(\tau)\tau d\tau,~t\geq 0,
	\end{eqnarray}
	where the modulars $\mathcal{J}_{\Phi}$ and $\mathcal{J}_{s,\Phi}$ are determined in the following form:
	$$
	\mathcal{J}_{\Phi}(u)=\int_{\mathbb{R}^N}\Phi(u)\,dx,
	$$
	$$
	\mathcal{J}_{s,\Phi}(u)=\int_{\mathbb{R}^N}\int_{\mathbb{R}^N}\Phi\left(\frac{\left|u(x)-u(y)\right|}{|x-y|^s}\right)\frac{dxdy}{|x-y|^N}.
	$$
	Furthermore, we consider the following work space
	$$W_0^{s,\Phi}(\Omega)=\{u\in W^{s,\Phi}(\mathbb{R}^N):u=0\mbox{ in }\Omega^c\}.$$

	Throughout this work we assume the function  $\Phi(t)=\int_{0}^{t}\varphi(\tau)\tau d\tau$ satisfies the following structural conditions:
	\begin{itemize}
		\item [$(\varphi_1)$] It holds that the function $\varphi:\mathbb{R}\setminus \{0\}\to \mathbb{R}$ verifies $t\mapsto t\varphi(t)$ is an odd, increasing homeomorphism from $\; \R\; {\rm into}\; \R$ for each $x,y\in\R^N$;
		\vspace{0,2cm}
		\item [$(\varphi_2)$] $1 \leq \ell\leq \dfrac{t^2\varphi(t)}{\Phi(t)} \leq m<\infty, \quad t\geq0$.
	\end{itemize}
	
	\subsection{Assumptions and main theorems}
	
	The question of regularity has been a central line of research in elliptic PDE since the mid 20th century, with extremely important contributions by Nirenberg, Caffarelli, Krylov, Evans, Figalli, De Giorgi, Nash, Moser, Ladyzhenskaya, Uraltseva, Maz’ya and many others.
	In particular, one of the most famous and important theorems in the theory was developed by De Giorgi \cite{DeGiorgi}. A standard reference for regularity results about fully nonlinear elliptic equations is the book of Caffarelli and Cabre \cite{CX}.

	The main goal of this paper is to extend the previous results to the setting of fractional Orlicz-Sobolev spaces. More specifically, we consider general $N$-functions $\Phi$ which can be more general than powerlike functions $\Phi(t) = |t|^{p}, t \geq 0$ with $p \in (1, N)$. Namely, we shall consider the following nonlinear fractional $\Phi$-Laplacian elliptic problem
	\begin{equation}\label{P}
		\left\{\
		\begin{array}{cl}
			\displaystyle(-\Delta_{\Phi})^{s}u= g(x,u), & \mbox{in}~\Omega,\\
			u=0, & \mbox{on}~\mathbb{R}^N\setminus \Omega,
		\end{array}
		\right.
	\end{equation}
	where $\Omega$ is a open bounded subset in $\mathbb{R}^N,~N\geq 2$, with Lipschitz boundary $\partial \Omega$, $g:\Omega\times\mathbb{R}$ is a Carath\'eodory function satisfying suitable conditions. Moreover,  $\displaystyle(-\Delta_{\Phi})^{s}$ is the nonlocal integro-differential operator of elliptic type defined by
	\begin{eqnarray}\label{operator}
		(-\Delta_{\Phi})^{s}u(x):=2\lim_{\epsilon\searrow 0}\int_{\mathbb{R}^N\setminus B_\epsilon(0)} \varphi\left(D_s u\right)D_su\frac{dy}{|x-y|^{N+s}},
	\end{eqnarray}
	where $ D_s u:=\frac{u(x)-u(y)}{|x-y|^s}$ and $s\in(0,1)$. Under these conditions, we consider the following definition
	\begin{definition}
		A function $u\in W_0^{s,\Phi}(\Omega)$ is said to be a weak solution of \eqref{P} if and only if  
		$$\langle \displaystyle(-\Delta_{\Phi})^s u,v\rangle=\int_{\Omega}g(x,u)vdx,~v\in W_0^{s,\Phi}(\Omega).$$
	\end{definition}
	\noindent Recall that 	$\langle , \rangle$ denotes the usual duality, see for instance \cite{ABSS2}. In order to prove some regularity results for weak solutions for Problem \eqref{P} we assume the following hypothesis:
	\begin{equation}\label{CCG}
		|g(x,t)|\leq a(x)[1+|t|^{\alpha-1}], t \in \mathbb{R}, x \in \Omega,
	\end{equation}
	where $a\in L_{\Psi}(\Omega)$ in such way that $\widetilde{\Psi}\circ\Phi\prec{\Phi}_{s}^*$ with $\alpha=\ell$ or $\alpha=m$. It is worthwhile to mention that \eqref{CCG} says that Problem \eqref{P} admits subcritical or critical behavior. The main idea here is to employ some tools developed in the seminal work \cite{BK}. As a consequence, our main first result can be stated as follows:  
	\begin{theorem}\label{B}
		Suppose that $(\varphi_{1})$, $(\varphi_{2})$, $\ell\in[1,N)$ and $\Phi\in\Delta'$ globally. Assume that \eqref{CCG} holds true. Assume also that one of the following hypothesis is verified:
		\begin{itemize}
			\item[(i)]$\Phi\not\approx t^{m}$ and $\alpha=\ell$ where $m<\ell_s^*$;		
			\item[(ii)] $\Phi\approx t^{m}$ and $\alpha=m$;
		\end{itemize}	
		Then any weak solution $u \in  W_0^{s,\Phi}(\Omega)$ for the problem \eqref{P} belongs to $L^{p}(\Omega)$ for all $ p\in(1,\infty)$.
	\end{theorem}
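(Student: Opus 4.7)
The plan is a Moser iteration adapted to the fractional Orlicz-Sobolev setting. Given a weak solution $u\in W_0^{s,\Phi}(\Omega)$, a truncation level $M>0$ and an iteration exponent $\beta\ge 0$, set $u_M:=\mathrm{sgn}(u)\min\{|u|,M\}$ and take as test function
\[
v:=\mathrm{sgn}(u)\min\{|u|^{\beta+1},M^{\beta+1}\}\in W_0^{s,\Phi}(\Omega),
\]
whose admissibility follows from a standard truncation/density argument. Inserting $v$ into the weak formulation produces, on the left,
\[
\mathcal{I}(\beta,M):=\iint_{\mathbb{R}^{2N}}\varphi(D_s u)\,D_s u\,\bigl(v(x)-v(y)\bigr)\,\frac{dx\,dy}{|x-y|^{N+s}},
\]
and on the right the nonlinear integral $\int_\Omega g(x,u)v\,dx$.

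The decisive technical ingredient is a pointwise algebraic inequality of the form
\[
\varphi(a-b)(a-b)\bigl(F(a)-F(b)\bigr)\ge c_\beta\,\Phi\bigl(G(a)-G(b)\bigr),\qquad a,b\in\mathbb{R},
\]
with $F(t):=\mathrm{sgn}(t)\min\{|t|^{\beta+1},M^{\beta+1}\}$, $G(t):=\mathrm{sgn}(t)\min\{|t|^{(\beta+\ell)/\ell},M^{(\beta+\ell)/\ell}\}$, and $c_\beta>0$ polynomial in $\beta$. In the pure fractional $p$-Laplacian case the corresponding inequality is elementary by homogeneity, and this is where the Orlicz framework genuinely bites: homogeneity must be replaced by the two-sided bound $\ell\,\Phi(t)\le t^2\varphi(t)\le m\,\Phi(t)$ supplied by $(\varphi_2)$, while the global $\Delta'$ hypothesis is invoked in the submultiplicative form $\Phi(st)\le C\Phi(s)\Phi(t)$ in order to extract the $\beta$-dependent power from $\Phi$ when comparing its values at different arguments, in the spirit of \cite{BK}. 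This is the main obstacle. Once the pointwise inequality is in place, integration and the fractional Sobolev-Orlicz embedding $W_0^{s,\Phi}(\Omega)\hookrightarrow L^{\Phi_s^*}(\Omega)$ yield
\[
\bigl\|G(u_M)\bigr\|_{L^{\Phi_s^*}(\Omega)}^{\ell}\le C_\beta\,\mathcal{I}(\beta,M).
\]

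For the right-hand side, the growth condition \eqref{CCG} and the Orlicz-H\"older inequality in the conjugate pair $(\Psi,\widetilde{\Psi})$ give
\[
\int_\Omega g(x,u)v\,dx \le 2\|a\|_{L_\Psi}\,\bigl\|1+|u|^{\alpha-1}|v|\bigr\|_{L_{\widetilde{\Psi}}}.
\]
Using $\alpha\in\{\ell,m\}$, the $\Delta'$ condition and the composition hypothesis $\widetilde{\Psi}\circ\Phi\prec\Phi_s^*$, the inner Luxemburg norm is controlled by a constant multiple of $\|G(u_M)\|_{L^{\widetilde{\Psi}\circ\Phi}(\Omega)}^{\ell}$, which is strictly weaker than the $L^{\Phi_s^*}$-norm produced on the left-hand side and is therefore absorbable at the iteration step.

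Closing the iteration then splits into the two cases. In case (i) (subcritical with $m<\ell_s^*$), each step produces a genuine exponent gain bounded below by $\ell_s^*/m>1$, so a suitably chosen sequence $\beta_k\to\infty$ reaches any finite integrability exponent after countably many iterations; with the constants $C_{\beta_k}$ growing only polynomially, standard Moser summation yields $u\in L^p(\Omega)$ for every $p\in(1,\infty)$. In case (ii) ($\Phi\approx t^m$, critical $\alpha=m$) there is no gain from subcriticality, and a Brezis-Kato type device must be inserted before the bootstrap: one localizes to a sublevel set on which the critical contribution carries arbitrarily small $L^{\ell_s^*}$-mass (by absolute continuity of $u\in L^{\ell_s^*}$), absorbs the critical term into the left-hand side, and only then runs the iteration. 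Letting $M\to\infty$ by Fatou's lemma gives $u\in L^p(\Omega)$ for all $p\in(1,\infty)$.
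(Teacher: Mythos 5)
Your overall strategy — a Moser iteration built on truncated powers of $u$, combined with a Brezis--Kato style splitting of the coefficient $a$ to absorb the critical part — is the same skeleton the paper uses. However, two points that you flag as technical but then leave open are in fact the crux of the argument, and as you have set them up they would not close.

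First, the pointwise inequality you posit,
\[
\varphi(a-b)(a-b)\bigl(F(a)-F(b)\bigr)\ \ge\ c_\beta\,\Phi\bigl(G(a)-G(b)\bigr),
\]
with $F(t)=\mathrm{sgn}(t)\min\{|t|^{\beta+1},M^{\beta+1}\}$ and $G(t)=\mathrm{sgn}(t)\min\{|t|^{(\beta+\ell)/\ell},M^{(\beta+\ell)/\ell}\}$, cannot be obtained by merely combining $(\varphi_2)$ with $\Delta'$. The reason it works in the $p$-Laplacian case is exact homogeneity; in the Orlicz case the inequality only survives if $F$ and $G$ are chosen so as to ``undo'' $\Phi$ in a Jensen-type estimate. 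Concretely, the paper replaces your $(F,G)$ by the matched pair
\[
\theta(t):=t\,\Phi\!\left(\frac{t_L^{\beta-1}}{A}\right),\qquad
\Gamma_\theta(t):=\int_0^t\Phi^{-1}\!\bigl(\theta'(\tau)\bigr)\,d\tau,
\]
and the whole point of defining $\Gamma_\theta$ through $\Phi^{-1}(\theta')$ is that $\Phi\!\bigl(\Phi^{-1}(\theta'(\tau))\bigr)=\theta'(\tau)$, which makes Jensen's inequality produce exactly the difference quotient $\frac{\theta(b)-\theta(a)}{b-a}$ (Lemma~\ref{Japlication-1}). Your pure-power $G$ destroys this compatibility, so the claimed $c_\beta$ does not come out, and $\Delta'$ alone cannot restore it.

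Second, you write the left-hand side lower bound as $\|G(u_M)\|_{L^{\Phi_s^*}}^\ell\le C_\beta\mathcal{I}(\beta,M)$ and later ``control'' the Orlicz norm $\|\Phi(\ldots)\|_{\widetilde\Psi}$ by something weaker; both steps silently pass between modulars and Luxemburg norms, which in Orlicz spaces is not a free substitution. The paper's mechanism for this is the free parameter $A$ inside $\theta$: after all estimates are in place it fixes $A=C^*\|u\,u_L^{\beta-1}\|_{\Phi_s^*}$, which forces the Luxemburg-norm bound $\|\Phi(uu_L^{\beta-1}/A)\|_{\widetilde\Psi}\le1$ (Lemma~\ref{psi-phi}) and simultaneously normalizes the modular so that $\xi_\Phi^{\pm}$ give clean two-sided control. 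Without an analogous normalization your inequalities only hold at the level of modulars and cannot be iterated.

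Finally, your description of case (ii) is not what the paper (or a correct argument) does. The $R$-splitting of $a$ (choosing $R$ so large that $\|a\chi_{[a>R]}\|_\Psi$ is small and the corresponding term is absorbed on the left) is used in \emph{both} cases (i) and (ii), not only in (ii); and in case (ii) the exponent gain does not vanish — since $\Phi\approx t^m$ one has $W_0^{s,\Phi}(\Omega)\hookrightarrow L^{m_s^*}(\Omega)$ and the iteration runs with $\beta_1=m_s^*/m=N/(N-sm)>1$. There is no need, and indeed no role in the paper's proof, for the sublevel-set localization of $u$ you propose.
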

	
	Now, we shall consider a regularity result by using a different growth condition. Namely, we assume the following conditions
	\begin{eqnarray}
		|g(x,t)|\leq C_1[|t|^{\ell-1}+|t|^{\ell_s^*- (m - \ell)-1}], t \in \mathbb{R}, x \in \Omega,\label{CCG2}\\
		|g(x,t)|\leq C_1[|t|^{m-1}+|t|^{m_s^*-1}], t \in \mathbb{R}, x \in \Omega,\label{CCG3}
	\end{eqnarray}
	where $C_1 > 0$ is a constant. In this way, we can prove the following result:
	
	\begin{corollary}\label{C}
		Suppose that $(\varphi_{1})$, $(\varphi_{2})$, $\ell\in[1,N)$ and $\Phi\in\Delta'$ globally. Assume that \eqref{CCG2}, $\Phi\not\approx t^{m}$ and  $m<\ell_s^*$ hold true. Then any weak solution $u \in  W_0^{s,\Phi}(\Omega)$ for the Problem \eqref{P} belongs to $L^{p}(\Omega), \forall \,\, p\in(1,\infty)$.
	\end{corollary}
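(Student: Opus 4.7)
The plan is to reduce Corollary \ref{C} to Theorem \ref{B}(i) by producing, for the given weak solution $u\in W_0^{s,\Phi}(\Omega)$, a coefficient $a(x)$ and an $N$-function $\Psi$ that realize hypothesis \eqref{CCG} with exponent $\alpha=\ell$. The starting observation is the elementary factorization
\[
|t|^{\ell_s^*-(m-\ell)-1} = |t|^{\ell-1}\cdot |t|^{\ell_s^*-m},
\]
which is meaningful precisely because the standing hypothesis $m<\ell_s^*$ keeps the second exponent positive. For the fixed $u$, I would then set $a(x):= C_1\bigl(1+|u(x)|^{\ell_s^*-m}\bigr)$, so that evaluating \eqref{CCG2} at $t=u(x)$ gives
\[
|g(x,u(x))|\leq a(x)\bigl(1+|u(x)|^{\ell-1}\bigr),
\]
which is the form that enters the Moser iteration underlying Theorem \ref{B}.

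The next step is to choose an $N$-function $\Psi$ compatible with the framework, namely one that satisfies $\widetilde{\Psi}\circ\Phi\prec\Phi_s^*$ while also placing $a$ in $L_\Psi(\Omega)$. I would take $\Psi$ to behave at infinity like $t^r$ with $r=\ell_s^*/(\ell_s^*-m)$. Its complementary function then satisfies $\widetilde{\Psi}(t)\asymp t^{\ell_s^*/m}$ for $t$ large, and the upper bound $\Phi(t)\lesssim t^m$ delivered by $(\varphi_2)$ yields $\widetilde{\Psi}\circ\Phi(t)\lesssim t^{\ell_s^*}$; the hypotheses $\Phi\not\approx t^m$ and $m<\ell_s^*$ promote this asymptotic comparison to the strict domination $\prec\Phi_s^*$. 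On the primal side, $\Psi(|u|^{\ell_s^*-m})\lesssim \Phi_s^*(|u|)$ up to lower-order terms absorbed by $|\Omega|<\infty$, and the fractional Orlicz--Sobolev embedding $W_0^{s,\Phi}(\Omega)\hookrightarrow L_{\Phi_s^*}(\Omega)$ guarantees $u\in L_{\Phi_s^*}(\Omega)$, whence $a\in L_\Psi(\Omega)$. Theorem \ref{B}(i) then applies and produces the desired conclusion $u\in L^p(\Omega)$ for every $p\in(1,\infty)$.

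The main obstacle is the simultaneous calibration of $\Psi$: it must grow slowly enough that $a$ is integrable in $L_\Psi$, yet fast enough that $\widetilde{\Psi}\circ\Phi$ stays strictly below $\Phi_s^*$. The strict inequality $m<\ell_s^*$ furnishes exactly the slack that makes these two demands compatible, and $\Phi\not\approx t^m$ is what converts the asymptotic bound on $\widetilde{\Psi}\circ\Phi$ into the strict domination required by \eqref{CCG}. Once this choice is made, everything else is a direct invocation of Theorem \ref{B}.
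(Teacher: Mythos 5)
Your proposal is correct and follows the same route as the paper's own proof: reduce Corollary~\ref{C} to Theorem~\ref{B}(i) by manufacturing a weight $a$ out of the fixed weak solution $u$ together with a compatible $N$-function $\Psi$. The only genuine difference is in the choice of $\Psi$. The paper takes $\widetilde{\Psi}=\Phi_s^*\circ\Phi^{-1}$, so that $\widetilde{\Psi}\circ\Phi=\Phi_s^*$ exactly, and then invokes Proposition~\ref{coro} to read off the index $m_{\Psi}=(\ell_s^*/m)'=\ell_s^*/(\ell_s^*-m)$; you bypass that computation by taking $\Psi$ of power type with exponent $r=\ell_s^*/(\ell_s^*-m)$ outright. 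Since this $r$ coincides with the upper Simonenko index of the paper's $\Psi$, both choices produce the same integrability estimate $\int_\Omega\Psi(a)\,dx\lesssim 1+\int_\Omega|u|^{\ell_s^*}\,dx<\infty$. Likewise, your $a(x)=C_1\bigl(1+|u(x)|^{\ell_s^*-m}\bigr)$ differs cosmetically from the paper's ratio $C[|u|^{\ell-1}+|u|^{\ell_s^*-(m-\ell)-1}]/(1+|u|^{\ell-1})$, but both are $\asymp 1+|u|^{\ell_s^*-m}$, and since only $g(x,u(x))$ enters the iteration this suffices.

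One small correction to the commentary: the ordering $\prec$ in this paper is \emph{not} strict (the strict version is $\prec\prec$), so the requirement $\widetilde{\Psi}\circ\Phi\prec\Phi_s^*$ is already secured by your asymptotic bound $\widetilde{\Psi}(\Phi(t))\lesssim t^{\ell_s^*}\lesssim\Phi_s^*(t)$, with no appeal to $\Phi\not\approx t^m$ needed there (indeed, the paper's own choice even realizes equality $\widetilde{\Psi}\circ\Phi=\Phi_s^*$). The hypothesis $\Phi\not\approx t^m$ is consumed solely to select item~(i) of Theorem~\ref{B} with $\alpha=\ell$, rather than item~(ii); it has nothing to do with tightening the domination.
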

	
	\begin{corollary}\label{C1}
		Suppose that $(\varphi_{1})$, $(\varphi_{2})$, $\ell\in[1,N)$ and $\Phi\in\Delta'$ globally. Assume that \eqref{CCG3} and $\Phi\approx t^{m}$ hold true. Then any weak solution $u \in  W_0^{s,\Phi}(\Omega)$ for the Problem \eqref{P} belongs to $L^{p}(\Omega)$ for each  $p\in(1,\infty)$.
	\end{corollary}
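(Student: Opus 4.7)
The plan is to derive Corollary \ref{C1} from Theorem \ref{B}(ii) by constructing, for the given weak solution $u$, an effective coefficient $a$ that absorbs the critical piece of $g$. Since $\Phi \approx t^{m}$ we have $\ell = m$ and $\Phi_s^* \approx t^{m_s^*}$, so \eqref{CCG} with $\alpha = m$ reduces to $|g(x,u(x))| \leq a(x)(1+|u(x)|^{m-1})$. First I would set
$$a(x) := C_{1} + C_{1}|u(x)|^{m_s^*-m},$$
so that at $t = u(x)$ the growth \eqref{CCG3} gives
$$|g(x,u(x))| \leq C_{1}|u(x)|^{m-1} + C_{1}|u(x)|^{m_s^*-m}|u(x)|^{m-1} \leq a(x)\bigl(1 + |u(x)|^{m-1}\bigr).$$
This pointwise bound is precisely what the Moser iteration of Theorem \ref{B} uses, since its test functions depend only on $u$ and $g$ appears only through the value $g(x,u(x))$.

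Next I would verify the integrability of $a$. By the fractional Orlicz--Sobolev embedding (in the power-like case $\Phi \approx t^{m}$), $u \in L^{m_s^*}(\Omega)$, hence $|u|^{m_s^*-m} \in L^{N/(ms)}(\Omega)$, since $(m_s^*-m)\cdot \tfrac{N}{ms} = m_s^*$. Choosing $\Psi(t) \approx t^{N/(ms)}$ yields $\widetilde{\Psi}\circ \Phi(t) \approx t^{m_s^*}$, which matches the growth of $\Phi_s^*$ exactly at the borderline. To upgrade this to the strict domination $\widetilde\Psi \circ \Phi \prec \Phi_s^*$ demanded by Theorem \ref{B}, I would run the iteration of Theorem \ref{B}(ii) with a Brezis--Kato-type splitting: for $K > 0$,
$$a = a_{K} + a^{K}, \qquad a_{K} := C_{1} + C_{1}|u|^{m_s^*-m}\chi_{\{|u|\leq K\}}, \qquad a^{K} := C_{1}|u|^{m_s^*-m}\chi_{\{|u|>K\}},$$
where $a_{K} \in L^{\infty}(\Omega)$ (hence in every Orlicz class compatible with $\widetilde\Psi \circ \Phi \prec \Phi_s^*$), while $\|a^{K}\|_{L^{N/(ms)}(\Omega)} \to 0$ as $K \to \infty$ by absolute continuity of the integral, using $u \in L^{m_s^*}(\Omega)$.

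With this splitting, in the Moser iterative inequality I would separate the contribution of $\int g(x,u)v\,dx$ into a bounded part (handled exactly as in the subcritical proof of Theorem \ref{B}(ii) applied to $a_{K}$) and a critical-tail part. Choosing $K$ large enough, the latter becomes a small multiple of a fractional Orlicz--Sobolev seminorm of the Moser test function and can be absorbed into the left-hand side. The bootstrap then closes and yields $u \in L^{p}(\Omega)$ for every $p \in (1,\infty)$. The main obstacle is precisely the borderline nature of \eqref{CCG3}: the natural choice of $a$ saturates the critical Sobolev conjugate instead of lying strictly below it, so the hard part is executing the Brezis--Kato absorption uniformly along the Moser iteration in the fractional Orlicz--Sobolev framework. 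Once this is set up, the remaining arguments are a direct specialization of the proof of Theorem \ref{B}(ii), with the $\Delta'$ condition on $\Phi$ used (as in Theorem \ref{B}) to handle compositions of $\Phi$ with truncations of $u$.
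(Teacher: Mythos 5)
Your plan is essentially the one the paper uses (implicitly, by analogy with its proof of Corollary~\ref{C}): set an effective coefficient $a(x)$ in terms of the solution $u$, check that $a\in L_{\Psi}(\Omega)$ via the embedding $u\in L^{m_s^*}(\Omega)$, and invoke Theorem~\ref{B}(ii). Your exponent bookkeeping, $\Psi\approx t^{N/(ms)}$, $\widetilde\Psi\circ\Phi\approx t^{m_s^*}\approx\Phi_s^*$, matches Proposition~\ref{coro} in the case $\ell=m$.

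The one place you overcomplicate is the ``strict domination'' worry and the Brezis--Kato splitting $a=a_K+a^K$. In this paper the relation $A\prec B$ is \emph{not} strict: by the stated definition $B\succ A$ only requires $A(t)\leq B(at)$ for some $a>0$ and large $t$, which is satisfied when $A\approx B$. Thus $\widetilde\Psi\circ\Phi\approx\Phi_s^*$ already satisfies the hypothesis \eqref{CCG}, and Theorem~\ref{B}(ii) applies directly. The absorption you describe is in fact already built into the proof of Theorem~\ref{B}: there $R=R(\beta)$ is chosen so that $\xi_\Phi^+(S_{s,\Phi}C^*)\beta^m\|a\chi_{[a>R]}\|_\Psi$ is small, using the absolute continuity of the modular of $a\in L_\Psi(\Omega)$ (available since $\Psi\in\Delta_2$). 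So there is no need to rerun the iteration; you may simply cite Theorem~\ref{B}(ii) once $a\in L_\Psi(\Omega)$ is verified, exactly as the paper does for Corollary~\ref{C}.
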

	
	Now we shall prove that any solution to the elliptic problem \eqref{P} remains bounded. In order to do that we combine some fine estimates together with the Moser's iteration. Hence, we can state the following result:
	
	\begin{theorem} \label{BK}
		Suppose that $(\varphi_{1})$, $(\varphi_{2})$, $\ell\in[1,N)$ and $\Phi\in\Delta'$ globally. Assume that \eqref{CCG} holds true. Assume also that one of the following hypothesis is verified:
		\begin{itemize}
			\item[(i)]$\Phi\not\approx t^{m}$ and $\alpha=\ell$ where $m<\ell_s^*$;		
			\item[(ii)] $\Phi\approx t^{m}$ and $\alpha=m$;
		\end{itemize}	
		Then any weak solution $u \in  W_0^{s,\Phi}(\Omega)$ for the Problem \eqref{P} is in $L^\infty(\Omega)$.
	\end{theorem}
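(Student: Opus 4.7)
The plan is to run a Moser-type iteration tailored to the fractional $\Phi$-Laplacian, taking as the starting block the information supplied by Theorem \ref{B}: namely, that every weak solution $u$ already belongs to $L^{p}(\Omega)$ for every finite $p$. What remains is to convert this qualitative integrability into a quantitative recursion that promotes the integrability all the way to $p=\infty$.

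Concretely, for parameters $q\geq 0$ and $K>0$ I would truncate via $u_K:=\mathrm{sign}(u)\min\{|u|,K\}$ and test the equation against $v:=u\,|u_K|^{q}$, which belongs to $W_0^{s,\Phi}(\Omega)$ because $u_K\in L^{\infty}(\Omega)$ and $\Phi\in\Delta'$ globally. The crucial ingredient is a pointwise inequality of the form
$$\varphi(D_s u)\,D_s u\cdot D_s v\;\geq\;c_{q}\,\Phi\!\left(D_s\!\left(\mathrm{sign}(u)\,|u_K|^{\beta(q)}\right)\right),\qquad \beta(q):=\tfrac{q+\ell}{\ell},$$
with $c_q>0$ whose quantitative dependence on $q$ drives the iteration; this inequality is to be extracted from the monotonicity of $\tau\mapsto\varphi(\tau)\tau$, the convexity of $\Phi$, and condition $(\varphi_{2})$. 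Integrating in $(x,y)$ against $|x-y|^{-N}$ and using the weak formulation then yields
$$c_{q}\,\mathcal{J}_{s,\Phi}\!\left(|u_K|^{\beta(q)}\right)\;\leq\;\int_{\Omega}g(x,u)\,v\,dx.$$

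To control the right-hand side I would invoke the growth assumption \eqref{CCG}, apply H\"older's inequality in Orlicz spaces to isolate $a\in L_\Psi(\Omega)$, and appeal to the continuous embedding of $W_0^{s,\Phi}(\Omega)$ into the critical Orlicz space $L^{\Phi_s^*}(\Omega)$ (which in case (ii), where $\Phi\approx t^m$, reduces to $L^{m_s^*}(\Omega)$). The structural hypothesis $\widetilde{\Psi}\circ\Phi\prec\Phi_s^*$, combined with the $L^{r}$-regularity of $u$ for every finite $r$ provided by Theorem \ref{B}, should allow the right-hand side to be bounded in terms of $\|u\|_{L^{r(q)}(\Omega)}$ at the cost of a universal gain factor $\chi>1$ coming from Sobolev-Orlicz. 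After letting $K\to\infty$ via Fatou's lemma and monotone convergence, this would produce the recursion
$$\|u\|_{L^{\chi\,r(q)}(\Omega)}\;\leq\;C(q)^{1/(q+\ell)}\,\|u\|_{L^{r(q)}(\Omega)}.$$

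Iterating with $q_{n+1}$ chosen so that $r(q_{n+1})=\chi\,r(q_n)$, I would check that $\sum_{n}\log C(q_n)/(q_n+\ell)$ is summable; this keeps the product of constants controlled and yields $\|u\|_{L^{\infty}(\Omega)}\leq C\,\|u\|_{L^{p_0}(\Omega)}$, for any starting exponent $p_0$ furnished by Theorem \ref{B}. The step I expect to be most delicate is the derivation of the pointwise algebraic inequality above: the fractional, two-point nature of the operator together with the non-homogeneity encoded in the $N$-function $\Phi$ prevents a direct reduction to the Brasco-Parini type inequalities available for the fractional $p$-Laplacian, and one must use both $\Phi\in\Delta'$ and the sharp form of $(\varphi_{2})$ in a way that tracks precisely the dependence of $c_q$ on $q$, so that the series generated by the iteration indeed converges.
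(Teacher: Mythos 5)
Your high-level plan is the right one: a Moser-type iteration, testing against a truncation of a power of $u$, the critical Orlicz--Sobolev embedding, and Theorem~\ref{B}'s $L^p$-integrability as starting data. But the central pointwise inequality you posit,
\[
\varphi(D_s u)\,D_s u \cdot D_s\!\left(u\,|u_K|^{q}\right)\;\geq\;c_{q}\,\Phi\!\left(D_s\!\left(\mathrm{sign}(u)\,|u_K|^{\beta(q)}\right)\right),\qquad \beta(q)=\tfrac{q+\ell}{\ell},
\]
is exactly the crux and it is not obtainable from ``monotonicity, convexity and $(\varphi_2)$'' for a general non-homogeneous $N$-function. For $\Phi(t)=|t|^p$ this is a Brasco--Parini type inequality and leans entirely on $p$-homogeneity; once $\ell\neq m$, a plain power of $u_K$ on the right-hand side with a fixed exponent $\beta(q)$ built from $\ell$ alone cannot match the two-sided growth of $\Phi$, and nothing in your sketch repairs this. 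This is why the paper does not test with $u\,|u_K|^q$. Instead it takes the $\Phi$-adapted test function $\theta(t)=t\,\Phi(t_L^{\beta-1}/A)$ and forms the companion primitive $\Gamma_\theta(t)=\int_0^t\Phi^{-1}(\theta'(\tau))\,d\tau$. Two things then happen that your plan does not reproduce: (1) Lemma~\ref{Japlication-1} delivers the required pointwise inequality $C\,\Phi(D_s\Gamma_\theta(u))\leq D_s(\theta(u))\cdot \varphi(D_s u)D_s u$ by combining the $\Delta'$-condition (used to split off $\Phi(D_s u)$ multiplicatively, not merely as a growth bound) with Jensen's inequality applied to $\Phi^{-1}\circ\theta'$; and (2) Lemma~\ref{estGamma} shows $\Gamma_\theta(t)\geq tt_L^{\beta-1}/(\beta A)$, i.e.\ the $\Phi$-adapted choice of $\theta$ is precisely what makes $\Gamma_\theta$ collapse back to a genuine power. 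You cannot get both of these from a plain-power test function. The free parameter $A$, later fixed to $C^*\|uu_L^{\beta-1}\|_{\Phi^*_s}$, together with Lemma~\ref{psi-phi}, is also what normalizes the modular so that the recursion closes without invoking an extra ``universal gain factor $\chi$.''

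A second, smaller gap: because $\xi_\Phi^-$ is a two-sided power $\min\{t^\ell,t^m\}$ rather than a single power, your claimed recursion $\|u\|_{L^{\chi r(q)}}\leq C(q)^{1/(q+\ell)}\|u\|_{L^{r(q)}}$ does not follow directly, and the correct gain exponent is $\delta=\ell_s^*/m$ (not $\ell_s^*/\ell$). The paper has to run the iteration in log-coordinates $F_k=\beta_k\ln\|u\|_{\beta_k\ell_s^*}$ and split the iterates into those with $\|u\|_{m\beta_k}\geq 1$ and those without, so that the contribution of $\xi_\Phi^-$ is a power that can be absorbed; the resulting series $\sum\lambda_k/\beta_1^k$ (with $\lambda_k$ logarithmic in $\beta_{k+1}$) is what actually converges. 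Your summability check is on the right track but, as stated, presumes a single-power modular behavior that $\Phi$ does not have.

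So: same strategy in outline, but the two load-bearing technical devices of the paper (the $\Phi$-adapted pair $(\theta,\Gamma_\theta)$ together with the Jensen/$\Delta'$ pointwise inequality, and the two-sided modular bookkeeping in the iteration) are missing, and without them the key step you flag as ``delicate'' is in fact a genuine gap.
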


	\begin{remark}
		It is important to mention that in item $ii)$ of Theorem \ref{B} the conditions $m < \ell^*_s$ is not required. The same statement is also verified for item $ii)$ of Corollary \ref{B} and item $ii)$ of Theorem \ref{BK}.  
	\end{remark}
	
	At this stage, we shall consider a simpler application for Theorem \ref{BK}. More precisely, taking into account the non-homogeneous case $g(x,t)=a(x)\in L_{\Psi}(\Omega)$, we consider the following result: 
	
	\begin{corollary}[The homegeneous case]\label{A}
		Suppose that $(\varphi_{1})$, $(\varphi_{2})$, $\ell\in[1,N)$, $\Phi\in\Delta'$ globally. Assume also that $g(x,t)=a(x)\in L_{\Psi}(\Omega)$ in such way that $\widetilde{\Psi}\circ\Phi\prec\Phi_{s}^*$. Then any weak solution $u \in W_0^{s,\Phi}(\Omega)$ for the Problem \eqref{P} considering the fractional $\Phi$-Laplacian operator is in such way that $ u \in L^{\infty}(\Omega)$. In particular, assuming that $\Phi(t)=|t|^p,~p\in(1,N)$ and $a\in L^q(\Omega)$ with $q\geq \frac{N}{sp}$ we obtain that the weak solutions $u \in W_0^{s,\Phi}(\Omega)$ for the Problem \eqref{P} belongs to $L^{\infty}(\Omega)$.
	\end{corollary}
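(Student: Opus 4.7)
The plan is to obtain this as a direct consequence of Theorem \ref{BK}. The key observation is that when $g(x,t) = a(x)$ does not depend on $t$, the growth condition \eqref{CCG} is trivially fulfilled, namely $|g(x,t)| = |a(x)| \leq |a(x)|(1 + |t|^{\alpha-1})$ for any admissible choice of $\alpha$, and the integrability hypothesis on $a$ required by \eqref{CCG} coincides exactly with the assumption $\widetilde{\Psi}\circ\Phi \prec \Phi_s^*$ imposed in the statement. All remaining structural assumptions $(\varphi_1)$, $(\varphi_2)$, $\ell \in [1,N)$ and $\Phi \in \Delta'$ globally are inherited verbatim.

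Next I would split according to the dichotomy on $\Phi$ present in Theorem \ref{BK}. If $\Phi \approx t^{m}$, select $\alpha = m$ and apply case (ii) of Theorem \ref{BK} directly, obtaining $u \in L^{\infty}(\Omega)$. If instead $\Phi \not\approx t^{m}$, choose $\alpha = \ell$; since the right-hand side has no $u$-dependence, the Moser iteration underlying Theorem \ref{BK} can be carried through without the technical restriction $m < \ell_s^{*}$, because that restriction is used solely to absorb a critical $|t|^{\alpha-1}$ contribution that is now absent. Concretely, the $L^{p}$ bound for every $p \in (1,\infty)$ furnished by Theorem \ref{B} and the integrability of $a$ already close the Bernoulli-type recursion.

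For the specialization to $\Phi(t)=|t|^{p}$ with $p\in(1,N)$ and $a\in L^{q}(\Omega)$ I would proceed as follows. Here $\ell=m=p$, so automatically $\Phi\approx t^{m}$ and case (ii) of Theorem \ref{BK} applies. It only remains to translate the abstract integrability condition: the complementary $N$-function satisfies $\widetilde{\Psi}(t)\sim t^{q/(q-1)}$, hence $\widetilde{\Psi}\circ\Phi(t)\sim t^{pq/(q-1)}$, while $\Phi_{s}^{*}(t)=t^{Np/(N-sp)}$. The relation $\widetilde{\Psi}\circ\Phi \prec \Phi_{s}^{*}$ therefore reads $pq/(q-1)\leq Np/(N-sp)$, which by a direct algebraic manipulation is equivalent to $q\geq N/(sp)$. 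Theorem \ref{BK} then gives $u\in L^{\infty}(\Omega)$.

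The main obstacle I anticipate is making precise the claim that the restriction $m<\ell_s^{*}$ in case (i) of Theorem \ref{BK} can be dropped when $g$ has no $u$-dependence. The cleanest way to justify this is to rerun the Moser iteration of Theorem \ref{BK} with $a\in L_{\Psi}$ treated as a fixed forcing term: the recursion step simplifies to estimating quantities of the form $\int a\,|u|^{k}\,dx$ via Hölder in Orlicz spaces and the fractional Sobolev embedding $W_0^{s,\Phi}(\Omega)\hookrightarrow L^{\Phi_s^{*}}(\Omega)$, so no critical absorption involving $\ell_s^{*}$ is required and the iteration can be closed directly.
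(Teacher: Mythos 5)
Your overall plan---reduce Corollary \ref{A} to Theorem \ref{BK} by observing that $g(x,t)=a(x)$ satisfies \eqref{CCG} trivially, then translate the abstract integrability hypothesis in the model case $\Phi(t)=|t|^{p}$---is exactly the paper's intended route, and the algebra showing $\widetilde{\Psi}\circ\Phi\prec\Phi_{s}^{*}$ is equivalent to $q\geq N/(sp)$ is correct. Since $\ell=m=p$ in that case, $\Phi\approx t^{m}$ and case (ii) of Theorem \ref{BK} applies, so the ``in particular'' assertion is fully justified by your argument.

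The gap is in your treatment of the case $\Phi\not\approx t^{m}$. You claim the restriction $m<\ell_{s}^{*}$ in case (i) of Theorem \ref{BK} can be dropped because it is ``used solely to absorb a critical $|t|^{\alpha-1}$ contribution that is now absent.'' That is not what the restriction does. In the proof of Theorem \ref{BK}, $m<\ell_{s}^{*}$ is needed only to ensure $\beta_{1}=\ell_{s}^{*}/m>1$, so that the iteration exponents $\beta_{k}=(\ell_{s}^{*}/m)^{k}$ diverge. The controlling recursion \eqref{desBK4i}, namely
\[\xi_{\Phi}^{-}\bigl(\|u\|_{\beta\ell_{s}^{*}}^{\beta}\bigr)\leq\overline{D}\,\beta^{m}\bigl[1+R\|u\|_{m\beta}^{\ell\beta}\bigr],\]
arises from estimating $\theta(u)=u\,\Phi(u_{L}^{\beta-1}/A)$ on $\{|u|\geq 1\}$ via $|u|\leq\xi_{\Phi}^{-}(|u|)$ and then applying $\xi_{\Phi}^{+}$ to $\Phi(uu_{L}^{\beta-1})$; the exponent $m$ on the right-hand side is produced by $\xi_{\Phi}^{+}$ regardless of whether $g$ depends on $t$. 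Hence the step ratio $\ell_{s}^{*}/m$ of the Bernoulli-type recursion is structural, and ``treating $a$ as a fixed forcing term'' does not change it. Your appeal to the $L^{p}$ bound from Theorem \ref{B} is also circular, since its case (i) imposes the same restriction $m<\ell_{s}^{*}$. In short, the reduction is correct only under the (implicit) assumption that one of the two alternatives of Theorem \ref{BK} holds; the attempted extension to $\Phi\not\approx t^{m}$ with $m\geq\ell_{s}^{*}$ is not substantiated.
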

	
	It is worthwhile to mention that for the $N$-function given by $\Phi(t)=|t|^p,~1<p<\infty$ and $\Psi(t)=|t|^q,~1<q<\infty$ we infer that $(\widetilde{\Psi}\circ\Phi)(t)=|t|^{q'p}$. As a consequence, the condition $\widetilde{\Psi}\circ\Phi<\Phi_s^*$ occurs if and only if $q\geq \frac{N}{sp}$. Therefore, we observe that Corollary \ref{A} generalizes \cite[Theorem 3.1]{Brasco} and \cite{peral}. It is important to mention some prototypes for our main problem. Considering the $N$-function $\Phi$ we obtain the fractional $\Phi$-Laplacian elliptic problem
	\begin{equation}\label{PP}
		\left\{\
		\begin{array}{cl}
			\displaystyle(-\Delta_{\Phi})^{s}u= g(x,u), & \mbox{in}~\Omega,\\
			u=0, & \mbox{on}~\mathbb{R}^N\setminus \Omega.
		\end{array}
		\right.
	\end{equation}
	The fractional $\Phi$-Laplacian operator have been widely studied in the last years, see \cite{Adams1, alves3,rad3, Donaldson,carvalho,FBS}. However, up to  our best knowledge, there exists a few regularity results for Problem \eqref{PP} assuming that $g$ is critical or subcritical, see for the local case \cite{CSA,fu}. A contribution in the present work is to ensure that any weak solution for Problem \eqref{PP} is bounded. For the especial case $\Phi(t) = |t|^p, p \in (1, N)$ the Problem \eqref{PP} became the fractional p-Laplacian elliptic problem given by 
	\begin{equation}\label{PPP}
		\left\{\
		\begin{array}{cl}
			\displaystyle(-\Delta_{p})^{s}u= g(x,u), & \mbox{in}~\Omega,\\
			u=0, & \mbox{on}~\mathbb{R}^N\setminus \Omega.
		\end{array}
		\right.
	\end{equation}
	On this subject we refer the interested reader to \cite{guia} where many results are proved considering existence, multiplicity and nonexistence of weak solutions. Here we refer the reader to the important works \cite{Brasco,guia}. More generality, we consider also the $N$-function $\Phi(t) = |t|^p + |t|^q$ with $1 < p < q < N$ which give us the following elliptic problem 
	\begin{equation}\label{PPPP}
		\left\{\
		\begin{array}{cl}
			\displaystyle(-\Delta_{p})^s + (-\Delta_{q})^{s}u= g(x,u), & \mbox{in}~\Omega,\\
			u=0, & \mbox{on}~\mathbb{R}^N\setminus \Omega.
		\end{array}
		\right.
	\end{equation}
	The Problem \eqref{PPPP} is named as the fractional $(p,q)-$Laplacian elliptic problem. On this topic we infer the reader to \cite{alves} and references therein. For regularity results considering anisotropic functionals we infer the reader to \cite{cianchi}. At this stage, we would like to emphasize that our main results extends the aforementioned works by considering subcritical or critical behavior on $g$ and in view of the generality of the $N$-function $\Phi$. 
	
	It is important to emphasize that for the fractional $\Phi$-Laplacian operator we have some additional difficulties as was pointed before. The main difficulty here is to combine these spaces with the fractional framework. Recall also that $W^{s, \Phi}_0(\Omega)$ and $L^{\Phi}(\Omega)$ are not interpolation spaces. As a consequence, we cannot apply the Moser iteration in a standard way. Namely, we overcame such difficulty using some fine estimates together with the fact that the $N$-function $\Phi$ belongs to the class $\Delta'$. Notice that the Moser's iteration \cite{moser} relies on the classical Lebesgue spaces $L^{p}(\Omega)$ which are not available for $N$-functions. Here we also refer the interested reader to \cite{pucci2,CSA,fang}. The main idea here was control the size of the Orlicz-Gagliardo seminorm by using a specific test function $\theta$, see Lemma \ref{psi-phi} ahead.

	\subsection{Notation} Throughout this work we shall use the following notation:
	
	\begin{itemize}
		\item $C$, $\tilde{C}$, $C_{1}$, $C_{2}$,... denote positive constants (possibly different).
		\item The norm in $L^{p}(\mathbb{R}^N)$ and $L^{\infty}(\mathbb{R}^N)$, will be denoted respectively by $\|\cdot\|_{p}, p \in [1, \infty),$ and $\|\cdot\|_{\infty}$.
		\item The open ball in $\mathbb{R}^N$ centered at $x_0 \in \mathbb{R}^N$ with radii $r > 0$ is denoted by $B_r(x_0)$. Given $A \subset \mathbb{R}^N$ we define $A^c = \{ x \in \mathbb{R}^N : x \notin A \}$.
	\end{itemize}

	\subsection{Outline} The remainder of this work is organized as follows: In the forthcoming section we consider some preliminary results concerning on Orlicz spaces and fractional Orlicz Sobolev spaces. In Section 3 we consider some technical results in order to apply the Moser's iteration for our setting. Section 4 is devoted to the proof of our main results. 
	
	\section{Preliminaries}
	
	\subsection{Orlicz spaces}
	
	In order to allow the fractional $\Phi-$Laplacian operator taking into account the fractional Orlicz space and another cases within the same framework, we introduce some definitions and remarks.
	
	\begin{definition}\label{Musielak function}
		A function $\Phi\colon\R\to\R$ is called an $N$-function if $\Phi$ satisfies the
		following conditions:
		\begin{itemize}
			\item [(a)] $\Phi$ is continuous, convex, increasing and $\Phi(0)=0$.
			\vspace{0,2cm}
			\item [(b)] $\displaystyle\lim_{t\to 0}\frac{\Phi(t)}{t}=0$ and $\displaystyle\lim_{t\to\infty}\frac{\Phi(t)}{t}=\infty$.
		\end{itemize}
	\end{definition}
	
	\begin{remark}
		We point out that assumptions $(\varphi_{1})$ and $(\varphi_{2})$ imply that the class of functions we are considering are $N$-functions.
	\end{remark}
	
	For the function $\Phi$ introduced just above we define the Orlicz class given by 
	$$
	K(\Omega)= \left\{u\colon\Omega\to\R\ \text{measurable}:\ \int_{\Omega}\Phi(|u(x)|)dx<\infty \right\},
	$$
	where $\Omega$ is an open subset in $\mathbb{R}^N,~N\geq 2$, with Lipschitz boundary $\partial \Omega$ and $\Phi$ was defined in \eqref{bar-phi}. The Orlicz space is defined by
	$$
	L_{\Phi}(\Omega)= \left\{u\colon\Omega\to\R\ \text{measurable}:\ \int_{\Omega}\Phi(\lambda|u(x)|)dx<\infty\mbox{ for some }\lambda>0 \right\}.
	$$
	The space $L_{\Phi}(\Omega)$ is a Banach space endowed with the Luxemburg norm
	$$
	\|u\|_{\Phi}=\inf\bigg{\{}\lambda>0:\ \int_{\Omega}\Phi\left(\frac{|u(x)|}{\lambda}\right)dx\leq1\bigg{\}}
	$$
	or the equivalent norm (the Orlicz norm) given by 
	$$
	\|u\|_{(\Phi)}=\sup_{\mathcal{J}_{\widetilde{\Phi}}(v)\leq1}\bigg{|}\int_{\Omega}u(x)v(x)dx\bigg{|}
	$$
	where $\widetilde{\Phi}$ denotes the conjugate Orlicz function of $\Phi$, that is, we put 
	$$
	\widetilde{\Phi}(t):=\sup\{tw -\Phi(w): w>0\}.
	$$
	It is possible to ensure that 
	$$\widetilde{\Phi}(\varphi(t)t)\leq\Phi(2t), \quad t \in \mathbb{R}.$$
	Furthermore, we can consider the following Young inequality
	\begin{equation} \label{Young}
		ab\leq \Phi(a)+\widetilde{\Phi}(b), \quad \text{for all }\ x\in \Omega,\ a,b\geq 0.
	\end{equation}
	As a consequence, we consider the following H\"older's inequality
	$$
	\int_\Omega |uv|\,dx \leq 2\|u\|_{\Phi} \|v\|_{\widetilde{\Phi}},
	$$
	for all $u\in L_{\Phi}(\Omega)$ and $v\in L_{\widetilde{\Phi}}(\Omega)$.
	For the next result we argue as was done in \cite{Fuk_1,Fuk_2}. The main idea is to compare the modular with the Luxemburg  norm. Hence, we consider the following result:
	\begin{proposition}\label{lema_naru}
		Assume that $(\varphi_1)$, $(\varphi_2)$ hold and set
		$$
		\xi_{\Phi}^-(t)=\min\{t^\ell,t^m\}~~\mbox{and}~~ \xi_{\Phi}^+(t)=\max\{t^\ell,t^m\},~~ t\geq 0.
		$$
		Then ${\Phi}$ satisfies the following estimates:
		\begin{eqnarray}\label{des-real}
			\xi_{\Phi}^-(t){\Phi}(\rho)\leq{\Phi}(\rho t)\leq \xi_{\Phi}^+(t){\Phi}(\rho), \quad \rho, t> 0,
		\end{eqnarray}
		$$
		\xi_{\Phi}^-(\|u\|_{{\Phi}})\leq\int_\Omega{\Phi}(u(x))dx\leq \xi_{\Phi}^+(\|u\|_{{\Phi}}), \quad u\in L_{{\Phi}}(\Omega).
		$$
	\end{proposition}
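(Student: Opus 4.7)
\textbf{Proof proposal for Proposition \ref{lema_naru}.}

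The plan is to derive \eqref{des-real} by a logarithmic-derivative argument that turns the two-sided algebraic bound $(\varphi_{2})$ into the two-sided power bound, and then to transfer the estimate from the modular to the Luxemburg norm via the normalization $\lambda=\|u\|_{\Phi}$. Because $\Phi(t)=\int_0^t\varphi(\tau)\tau\,d\tau$ we have $\Phi'(t)=t\varphi(t)$, so the key hypothesis $(\varphi_2)$ reads $\ell\leq t\Phi'(t)/\Phi(t)\leq m$ for all $t>0$.

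First I would fix $\rho>0$ and set $h(t):=\Phi(\rho t)$ for $t>0$; a direct computation gives $\frac{d}{dt}\log h(t)=\frac{\rho t\,\Phi'(\rho t)}{t\,\Phi(\rho t)}$, which by $(\varphi_2)$ satisfies $\ell/t\leq (\log h)'(t)\leq m/t$. Integrating from $1$ to $t$ when $t\geq 1$, and from $t$ to $1$ when $0<t\leq 1$, and exponentiating yields
\begin{equation*}
t^\ell\,\Phi(\rho)\leq\Phi(\rho t)\leq t^m\,\Phi(\rho)\quad (t\geq 1),\qquad t^m\,\Phi(\rho)\leq\Phi(\rho t)\leq t^\ell\,\Phi(\rho)\quad (0<t\leq 1).
\end{equation*}
In both regimes the lower bound is $\xi_\Phi^-(t)\Phi(\rho)$ and the upper bound is $\xi_\Phi^+(t)\Phi(\rho)$, proving \eqref{des-real}.

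Next I would deduce the modular bound. A first consequence of \eqref{des-real} applied with $t=2$ is $\Phi(2\rho)\leq 2^m\Phi(\rho)$, so $\Phi\in\Delta_2$; this guarantees that the map $\lambda\mapsto\int_\Omega\Phi(u(x)/\lambda)\,dx$ is finite, continuous and strictly decreasing on $(0,\infty)$ whenever $u\neq 0$ in $L_\Phi(\Omega)$, and therefore the infimum defining the Luxemburg norm is attained with equality, i.e.\ $\int_\Omega\Phi\bigl(u(x)/\|u\|_\Phi\bigr)\,dx=1$. The case $u=0$ is trivial, so assume $u\neq 0$ and set $\lambda:=\|u\|_\Phi$. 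Applying the pointwise inequality \eqref{des-real} with $\rho=|u(x)|/\lambda$ and $t=\lambda$ gives
\begin{equation*}
\xi_\Phi^-(\lambda)\,\Phi\!\left(\frac{|u(x)|}{\lambda}\right)\leq\Phi(|u(x)|)\leq\xi_\Phi^+(\lambda)\,\Phi\!\left(\frac{|u(x)|}{\lambda}\right),
\end{equation*}
and integrating over $\Omega$ while using the equality $\int_\Omega\Phi(u/\lambda)\,dx=1$ produces the desired bound $\xi_\Phi^-(\|u\|_\Phi)\leq\int_\Omega\Phi(u)\,dx\leq\xi_\Phi^+(\|u\|_\Phi)$.

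The main obstacle I expect is the equality $\int_\Omega\Phi(u/\|u\|_\Phi)\,dx=1$: without $\Delta_2$ this is only an inequality $\leq 1$, which would ruin the lower estimate. The fact that $(\varphi_2)$ with $m<\infty$ forces $\Phi\in\Delta_2$ is therefore essential, and I would justify the continuity and strict monotonicity of the modular in $\lambda$ explicitly (using dominated convergence, permitted precisely by the $\Delta_2$ bound) before invoking the equality. Apart from this point, the argument is a careful bookkeeping of the two regimes $\lambda\geq 1$ and $0<\lambda<1$, which is handled uniformly by writing the bounds in terms of $\xi_\Phi^\pm$.
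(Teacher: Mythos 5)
Your argument is correct, and since the paper gives no proof here (it simply refers to Fukagai--Narukawa), your logarithmic-derivative computation is exactly the standard route from those references: rewrite $(\varphi_2)$ as $\ell \leq t\Phi'(t)/\Phi(t) \leq m$, integrate $(\log \Phi(\rho t))'$ in $t$ over $[1,t]$ or $[t,1]$, and package the two regimes with $\xi_\Phi^\pm$.

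One small remark on the second half: you invoke $\Delta_2$ (via the $t=2$ case of \eqref{des-real}) to justify $\int_\Omega \Phi(u/\|u\|_\Phi)\,dx = 1$ by dominated convergence. That is fine, but it is slightly more economical to use the already-proved two-sided bound directly: if $\lambda_0 := \|u\|_\Phi$ and $\int_\Omega\Phi(|u|/\lambda_0)\,dx = 1 - \varepsilon < 1$, then for $\lambda < \lambda_0$ one has $\int_\Omega\Phi(|u|/\lambda)\,dx \leq \xi_\Phi^+(\lambda_0/\lambda)(1-\varepsilon)$, which stays $\leq 1$ for $\lambda$ close enough to $\lambda_0$, contradicting the infimum. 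This sidesteps the need to spell out the dominated-convergence step and keeps the whole proof self-contained on \eqref{des-real}. Either way, the proof stands.
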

	\begin{remark}
		It is worthwhile to mention that inequality \eqref{des-real} is also verified by using $\Phi^{*}_{s}$ instead of $\Phi$, with $\ell^{*}_{s}$ and $m^{*}_{s}$. More generally, for each $N$-function $\Psi$, we can apply Proposition \ref{lema_naru} by using $\Psi$ instead of the function $\Phi$. In this setting is enough to use $\ell_{\Psi}$ and $m_{\Psi}$ in place of $\ell$ and $m$, respectively.
	\end{remark}
	\begin{definition}
		Let $\Phi$ be an $N$-function. We say that
		\begin{enumerate}
			\item $\Phi$ satisfies the $\Delta_2$-condition globally, in short $\Phi\in \Delta_2$, if there exists $K>0$ such that
			$$\Phi(2t)\leq K \Phi(t),~\forall t\geq 0;$$
			\item $\Phi$ satisfies the $\Delta'$-condition globally, in short $\Phi\in \Delta'$, if there exists $C>0$ such that
			$$\Phi(st)\leq C\Phi(s)\Phi(t),\quad\forall s,t\geq 0.$$
		\end{enumerate}	
	\end{definition}
	It is easy to verify that  the $N$-function $\Phi: \mathbb{R} \to \mathbb{R}$ given by $\Phi(t) = |t|^{p}, t \geq 0$ with $p \in (1, N)$ satisfies the $\Delta'$ condition. More generally, we also consider the $N$-function $\Phi(t) = |t|^p + |t|^q$ with $1 < p < q < N$ as an example of function verifying the $\Delta'$-condition globally. The $N$-function $\Phi(t):=|t|^{p}(|\ln|t||+1)$ satisfies $\Delta'$-condition globally, see \cite[Page 33]{Kras}.  We point that $\Delta'$-condition globally implies that $\Delta_2$-condition globally is satisfied. Assuming that $\Phi$ satisfies $(\varphi_1)$ and $(\varphi_2)$, then $\Phi$ and $\widetilde\Phi$ satisfies $\Delta_2$-condition globally and $L_{\Phi}(\Omega)$ is a reflexive Banach space.
	\begin{definition}
		Let $A,B:\Omega\times \mathbb{R}\to \mathbb{R}$ be two $N$-functions. We say that $A$ is stronger (resp essentially stronger) than $B$, in short we write $A\succ B$ (resp $A\succ \succ B$), if almost every $x\in \overline{\Omega}$ we obtain
		$$B(t)\leq A(at),~t\geq t_0,$$
		for some (respectively for each) $a>0$ and $t_0 > 0$ (respectively depending on $a > 0$).
	\end{definition}
	\begin{remark}
		It is important to emphasize that $A\succ \succ B$ is equivalent to the following condition
		$$\lim_{t\to\infty}\frac{B(\lambda t)}{A(t)}=0,$$
		for all $\lambda>0$.
	\end{remark}

	\subsection{Fractional Orlicz Sobolev spaces}
	
	Now, we shall give a brief overview on the fractional Orlicz-Sobolev. Here we refer the interestead reader to the important works \cite{ABSS1,ABSS2}. Due to the nature of the operator $(-\Delta_{\Phi})^{s}$ defined in \eqref{operator} which is nonlocal, we shall consider the Fractional Orlicz Sobolev spaces $W^{s,\Phi}(\Omega)$ defined by \eqref{W}. This space can be equipped with the norm 
	$$\|u\|_{s,\Phi}:=\|u\|_{\Phi}+[u]_{s,\Phi}, \quad u \in W^{s,\Phi}(\Omega) $$
	where $[.]_{s,\Phi}$ is the Orlicz-Gagliardo seminorm defined by
	$$[u]_{s,\Phi}:=\inf\left\{\lambda>0:\mathcal{J}_{s,\Phi}\left(\frac{u}{\lambda}\right)\leq 1\right\}.$$
	It is important to emphasize that $[.]_{s,\Phi}$ is the Orlicz-Gagliardo seminorm defined by
	$$[u]_{s,\Phi}:=\inf\left\{\lambda>0:\mathcal{J}_{s,\Phi}\left(\frac{u}{\lambda}\right)\leq 1\right\}$$
	where $\Phi$ is an $N$-function. In the special case, taking $\Phi(t) = |t|^p$, the last identity becomes the Gagliardo seminorm for the fractional $p$-Laplacian operator given by
	$$[u]_{s}:=\left\{ \left(\int_{\mathbb{R}^N} \int_{\mathbb{R}^N} \dfrac{|u(x) - u(y)|^p}{|x - y|^{N + sp}} dx dx\right)^{1/p}, u \in W^{s,p}(\Omega), u = 0 \, \, \mbox{in} \,\, \mathbb{R}^N \setminus \Omega \right\}.$$
	\begin{proposition}\label{lema_naru-gagliardo}
		Assume that $(\varphi_1)-(\varphi_2)$ hold. Then the function ${\Phi}$ satisfies the following powerful inequality:
		$$
		\xi_{\Phi}^-([u]_{s,\Phi})\leq\mathcal{J}_{s,\Phi}(u)\leq \xi_{\Phi}^+([u]_{s,\Phi}), \quad u\in W^{s,\Phi}(\Omega),
		$$
		where the functions $\xi_{\Phi}^-, \xi_{\Phi}^+$ was defined in Proposition \ref{lema_naru}.
	\end{proposition}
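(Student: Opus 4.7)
The statement is the modular--seminorm comparison for the Orlicz--Gagliardo functional, which is the exact analogue of Proposition \ref{lema_naru} with the modular $\mathcal{J}_{s,\Phi}$ in place of $\mathcal{J}_{\Phi}$ and the seminorm $[\cdot]_{s,\Phi}$ in place of $\|\cdot\|_{\Phi}$. Accordingly, my plan is to reduce the claim to the pointwise inequality \eqref{des-real} inside the double integral defining $\mathcal{J}_{s,\Phi}$.

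First I would dispose of the trivial case $[u]_{s,\Phi}=0$, in which $u$ is (a.e.) constant so $\mathcal{J}_{s,\Phi}(u)=0$ and both sides vanish. Assuming $\lambda:=[u]_{s,\Phi}>0$, the key observation is that for each fixed pair $(x,y)$ one may apply \eqref{des-real} with $t=\lambda$ and $\rho=|u(x)-u(y)|/(\lambda|x-y|^{s})$, obtaining
\[
\xi_{\Phi}^{-}(\lambda)\,\Phi\!\left(\frac{|u(x)-u(y)|}{\lambda\,|x-y|^{s}}\right)\;\leq\;\Phi\!\left(\frac{|u(x)-u(y)|}{|x-y|^{s}}\right)\;\leq\;\xi_{\Phi}^{+}(\lambda)\,\Phi\!\left(\frac{|u(x)-u(y)|}{\lambda\,|x-y|^{s}}\right).
\]
Integrating against the measure $|x-y|^{-N}\,dx\,dy$ on $\mathbb{R}^{N}\times\mathbb{R}^{N}$ yields the double modular inequality
\[
\xi_{\Phi}^{-}(\lambda)\,\mathcal{J}_{s,\Phi}(u/\lambda)\;\leq\;\mathcal{J}_{s,\Phi}(u)\;\leq\;\xi_{\Phi}^{+}(\lambda)\,\mathcal{J}_{s,\Phi}(u/\lambda).
\]

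For the upper estimate I would take $\lambda_{\varepsilon}:=(1+\varepsilon)[u]_{s,\Phi}$; by definition of the infimum $\mathcal{J}_{s,\Phi}(u/\lambda_{\varepsilon})\leq 1$, so the right side above gives $\mathcal{J}_{s,\Phi}(u)\leq \xi_{\Phi}^{+}(\lambda_{\varepsilon})$, and I let $\varepsilon\downarrow 0$ using the continuity of $t\mapsto\xi_{\Phi}^{+}(t)=\max\{t^{\ell},t^{m}\}$. For the lower estimate I would use that hypotheses $(\varphi_{1})$--$(\varphi_{2})$ force both $\Phi$ and $\widetilde{\Phi}$ to satisfy the $\Delta_{2}$-condition globally, which implies that the infimum defining the Luxemburg-type seminorm is attained, that is, $\mathcal{J}_{s,\Phi}(u/[u]_{s,\Phi})=1$. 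Plugging $\lambda=[u]_{s,\Phi}$ into the left side of the double inequality then gives $\xi_{\Phi}^{-}([u]_{s,\Phi})\leq\mathcal{J}_{s,\Phi}(u)$ at once.

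The main obstacle, and indeed the only subtle point, is the attainment $\mathcal{J}_{s,\Phi}(u/[u]_{s,\Phi})=1$ that I invoke for the lower bound. This is the standard Luxemburg-norm attainment lemma, but in the nonlocal setting one has to verify continuity of $\lambda\mapsto\mathcal{J}_{s,\Phi}(u/\lambda)$ on $(0,\infty)$ via dominated convergence (justified by the $\Delta_{2}$-bound $\Phi(2t)\leq K\Phi(t)$ together with an upper envelope constructed from a slightly smaller $\lambda$). Once that continuity is in hand, monotonicity plus the definition of the infimum force the modular to equal $1$ at $\lambda=[u]_{s,\Phi}$, completing the argument.
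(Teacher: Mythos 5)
Your proof is correct, and it supplies the details that the paper itself omits: the paper's proof of Proposition \ref{lema_naru-gagliardo} consists only of the remark that it ``follows the same ideas discussed in \cite{Fuk_1,Fuk_2}'', which is precisely the argument you carry out. The reduction is the right one: apply the pointwise inequality \eqref{des-real} with $\rho=|u(x)-u(y)|/(\lambda|x-y|^{s})$ and $t=\lambda$, integrate against $|x-y|^{-N}\,dx\,dy$, and then feed in the two facts about the Luxemburg infimum. Your $\varepsilon$-dilation handles the upper bound without any attainment claim, while for the lower bound you correctly identify that the only nontrivial point is $\mathcal{J}_{s,\Phi}(u/[u]_{s,\Phi})=1$, which indeed follows from $\Phi\in\Delta_2$ (guaranteed by $(\varphi_2)$, since $m<\infty$) via finiteness of the modular for every $\lambda>0$, monotonicity, and continuity by dominated convergence. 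Two small remarks: the $\Delta_2$-condition for $\widetilde{\Phi}$ that you mention is not needed here (only $\Phi\in\Delta_2$ enters), and one should observe that $u\in W^{s,\Phi}$ forces $[u]_{s,\Phi}<\infty$ (by convexity, $\mathcal{J}_{s,\Phi}(u/\lambda)\le\mathcal{J}_{s,\Phi}(u)/\lambda\to0$), so the case split $[u]_{s,\Phi}=0$ versus $0<[u]_{s,\Phi}<\infty$ is exhaustive. Neither affects the validity of the argument.
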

	\begin{proof}
		The proof follows the same ideas discussed in \cite{Fuk_1,Fuk_2}. We omit the details. 
	\end{proof}
	
	Now, we shall consider some embedding results. To this end, we suppose that $\Phi$ satisfies
	\begin{eqnarray}\label{critical-1}
		\int_0^1\frac{\Phi^{-1}(\tau)}{\tau^{\frac{N+s}{N}}} d\tau<\infty,~\forall x\in\overline{\Omega}
	\end{eqnarray}
	and
	\begin{eqnarray}\label{critical-2}
		\int_1^\infty\frac{\Phi^{-1}(\tau)}{\tau^{\frac{N+s}{N}}} d\tau=\infty,~\forall x\in\overline{\Omega}.
	\end{eqnarray}
	Assuming that \eqref{critical-1} and \eqref{critical-2} are satisfied we define the inverse for the conjugate $N$-function which give the critical behavior using the function $\Phi$. More specifically, we consider the following function:
	$$(\Phi_{s}^*)^{-1}(t)=\int_0^t\frac{\Phi^{-1}(\tau)}{\tau^{\frac{N+s}{N}}} d\tau, \quad  t \geq 0, x \in \Omega.$$
	As a consequence, by using \cite{ABSS2}, we consider the following embedding results: 
	\begin{theorem}
		Let $\Omega$ be a bounded open subset of $\mathbb{R}^N$ and $C^{0,1}-$regularity with bounded boundary. Assume that \eqref{critical-1} and \eqref{critical-2} hold. Then the embedding 
		$$W^{s,\Phi}(\Omega)\hookrightarrow L_{\Phi_{s}^*}(\Omega)$$
		is continuous and 
		$$W^{s,\Phi}(\Omega)\hookrightarrow L_{B}(\Omega)$$
		is compact for all $B\prec\prec \Phi_{s}^*$.
	\end{theorem}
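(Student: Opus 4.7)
The plan is to reduce the problem on $\Omega$ to one on $\mathbb{R}^N$, where translation invariance and symmetrisation are available. First I would exploit the $C^{0,1}$-regularity of $\partial\Omega$ to construct a bounded linear extension operator $E:W^{s,\Phi}(\Omega)\to W^{s,\Phi}(\mathbb{R}^N)$. In this fractional Orlicz context the extension is built via local reflection across Lipschitz charts combined with a partition of unity, the essential point being to bound $\mathcal{J}_{s,\Phi}(Eu)$ in terms of $\mathcal{J}_{s,\Phi}(u)+\mathcal{J}_{\Phi}(u)$; the $\Delta_2$-condition (implied by $(\varphi_1)$--$(\varphi_2)$) and Proposition \ref{lema_naru-gagliardo} then translate these modular bounds into norm bounds.

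For the continuous embedding on $\mathbb{R}^N$, my approach would proceed in three steps. (i) Establish a fractional P\'olya--Szeg\H{o} rearrangement inequality $\mathcal{J}_{s,\Phi}(u^*)\le \mathcal{J}_{s,\Phi}(u)$, where $u^*$ denotes the symmetric decreasing rearrangement. This follows from a layer-cake representation of the double integral combined with Riesz's rearrangement inequality applied to characteristic functions of superlevel sets, together with the monotonicity and convexity of $\Phi$. (ii) For radial decreasing profiles, control $u^*(|x|)$ by a Riesz-potential-type quantity built from the nonlocal fractional difference $D_s u^*$. (iii) Apply an O'Neil-type Orlicz convolution inequality and match it with the definition of $(\Phi_s^*)^{-1}$; the critical integrability assumptions \eqref{critical-1}--\eqref{critical-2} are tailored precisely so that this step closes and delivers a genuine Orlicz norm, rather than the Morrey-type $L^\infty$ bound one would obtain if \eqref{critical-2} failed.

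For the compact embedding into $L_B(\Omega)$ with $B\prec\prec\Phi_s^*$, I would use the Riesz--Fr\'echet--Kolmogorov criterion in its Orlicz formulation. Boundedness of $\{u_n\}\subset W^{s,\Phi}(\Omega)$ gives a uniform modular bound on $\mathcal{J}_{s,\Phi}$, and equicontinuity of translations follows by estimating $\|u_n(\cdot+h)-u_n(\cdot)\|_{\Phi}$ through the fractional seminorm. The essentially-stronger hypothesis $B\prec\prec \Phi_s^*$ is what promotes continuity to compactness: splitting $u_n-u$ at a cutoff level $T$, the continuous embedding into $L_{\Phi_s^*}$ controls the large-value part uniformly (and the ratio $B(\lambda t)/\Phi_s^*(t)\to 0$ makes this contribution arbitrarily small in the $L_B$ modular as $T\to\infty$), while the remaining truncated part is handled by a Rellich-type argument in $L_\Phi$.

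The main obstacle is step (i), the fractional P\'olya--Szeg\H{o} inequality in the Orlicz framework: in contrast with pure powers one cannot invoke explicit rearrangement formulas, and the double integral $\mathcal{J}_{s,\Phi}$ mixes the nonlinear function $\Phi$ with the singular kernel $|x-y|^{-N-s}$ in a way that forces one to argue level-set by level-set. Once this monotonicity is in hand, the remaining estimates reduce to a one-dimensional integral against $\Phi^{-1}$ over $(0,\infty)$, where the critical assumptions \eqref{critical-1}--\eqref{critical-2} take over and encode exactly the scaling balance needed to identify $\Phi_s^*$ as the sharp target space.
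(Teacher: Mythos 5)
The paper does not actually prove this theorem: it is quoted directly from the preprint [ABSS2] (Azroul–Benkirane–Shimi–Srati, \emph{Embedding and extension results in Fractional Musielak-Sobolev spaces}), so there is no ``paper's own proof'' against which to match your argument. Judged on its own merits, your sketch follows the standard symmetrization route for optimal Sobolev-type embeddings (extension, Pólya--Szeg\H{o}, reduction to a one-dimensional Hardy/O'Neil inequality, Riesz--Fréchet--Kolmogorov for compactness), which is legitimate and is essentially the route taken in the companion papers [FBS] and [DNFBS] already cited in the bibliography.

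Two remarks on where the sketch has genuine gaps. First, the step you rightly flag as the main obstacle --- the fractional Pólya--Szeg\H{o} inequality $\mathcal{J}_{s,\Phi}(u^*)\le\mathcal{J}_{s,\Phi}(u)$ for a general convex $N$-function $\Phi$ --- is not obtainable from ``layer-cake plus Riesz plus convexity'' in the naive way: the layer-cake representation puts an integral \emph{inside} $\Phi$, and the convexity/Jensen direction is the wrong one for passing $\Phi$ through that integral. The correct argument goes through a two-point (Baernstein--Taylor / Almgren--Lieb) rearrangement lemma on spheres and is precisely the content of [DNFBS]; you should invoke that reference rather than attempt to rederive it, since a self-contained proof would dwarf the rest of the argument. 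Second, steps (ii)--(iii) are too compressed: after symmetrization one needs a genuinely one-dimensional reduction for radially decreasing profiles, matching the scaling in $(\Phi_s^*)^{-1}(t)=\int_0^t \Phi^{-1}(\tau)\,\tau^{-(N+s)/N}\,d\tau$, and the divergence condition \eqref{critical-2} is what prevents a degenerate collapse to $L^\infty$; as written the claim that an ``O'Neil-type Orlicz convolution inequality'' closes the estimate is asserted but not justified --- this is exactly the Cianchi-type reduction one would have to carry out. The extension-operator step also deserves a citation (again [ABSS2]) rather than a claim that it is routine, since in the fractional Orlicz setting the modular $\mathcal{J}_{s,\Phi}$ is not obviously stable under local reflections. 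The compactness part via truncation at level $T$ plus $B\prec\prec\Phi_s^*$ plus RFK is sound, modulo the translation estimate $\|\tau_h u - u\|_\Phi \lesssim \xi_\Phi^+(|h|^s)^{1/\ell}\,[u]_{s,\Phi}$ which must be spelled out.
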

	Now, we introduce a closed linear subspace of $W^{s,\Phi}(\Omega)$ as follows
	$$W^{s,\Phi}_0(\Omega):=\left\{u\in W^{s,\Phi}(\mathbb{R}^N):u=0\mbox{ a.e. in }\mathbb{R}^N\setminus\Omega\right\}.$$
	Furthermore, assuming that $\Omega$ is a bounded open subset of $\mathbb{R}^N$ and $C^{0,1}-$regularity with bounded boundary, we consider the generalized Poincar\'e type inequality \cite{FBS}, i.e., there exists a positive constant $\gamma>0$ such that
	$$
	\|u\|_{\Phi}\leq \gamma [u]_{s,\Phi},~\forall u\in W^{s,\Phi}_0(\Omega).
	$$
	Consequently, we deduce that $\|.\|_{\Phi}$ is a norm in $ W^{s,\Phi}_0(\Omega)$ which is equivalent to the norm $\|.\|_{s,\Phi}$ in $ W^{s,\Phi}_0(\Omega)$.
	
	The next Proposition will be useful to prove Corollary \ref{C}.
	
	\begin{proposition}\label{coro} Assume that $(\varphi_1)-(\varphi_2)$ hold. Then $\Phi^* \circ \Phi^{-1}$ is a $N$-function
		and $\tilde{\Psi} = \Phi^* \circ \Phi^{-1}$ satisfies $m_{\Psi}=(\ell^{*}_{s}/m)^{\prime}$ and $\ell_{\Psi}=(m_{s}^{*}/\ell)^{\prime}$.	
	\end{proposition}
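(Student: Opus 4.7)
The plan is to compute the Simonenko indices of the composite $F:=\Phi_{s}^{*}\circ\Phi^{-1}$ directly from $(\varphi_{2})$ and its analogue for the critical function $\Phi_{s}^{*}$, and then to read off the indices of $\Psi$ from the standard conjugation rule.

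Writing $u=\Phi^{-1}(t)$, so that $t=\Phi(u)$ and $du/dt=1/\Phi'(u)$, the chain rule gives $F'(t)=(\Phi_{s}^{*})'(u)/\Phi'(u)$ and therefore
\[
\frac{t\,F'(t)}{F(t)}=\frac{\Phi(u)}{u\,\Phi'(u)}\cdot\frac{u\,(\Phi_{s}^{*})'(u)}{\Phi_{s}^{*}(u)}.
\]
The first factor is the reciprocal of $u^{2}\varphi(u)/\Phi(u)$, which lies in $[\ell,m]$ by $(\varphi_{2})$; the second factor lies in $[\ell_{s}^{*},m_{s}^{*}]$ by the remark following Proposition \ref{lema_naru} applied to $\Phi_{s}^{*}$. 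Multiplying these two-sided bounds yields
\[
\frac{\ell_{s}^{*}}{m}\;\leq\;\frac{t\,F'(t)}{F(t)}\;\leq\;\frac{m_{s}^{*}}{\ell},\qquad t>0.
\]

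Next I would check the $N$-function axioms for $F$. Continuity, strict monotonicity, $F(0)=0$ and $F(+\infty)=+\infty$ follow at once from the corresponding properties of $\Phi_{s}^{*}$ and $\Phi^{-1}$; the two limit conditions $F(t)/t\to 0$ as $t\to 0$ and $F(t)/t\to\infty$ as $t\to\infty$ follow by integrating the lower bound $tF'(t)/F(t)\geq \ell_{s}^{*}/m\geq 1$. Convexity comes from the fact that $(\Phi_{s}^{*})'\circ\Phi^{-1}/\Phi'\circ\Phi^{-1}$ is non-decreasing in $t$, which is in turn a consequence of the index comparison above. Having identified $F=\widetilde{\Psi}$ as an $N$-function with indices $\ell_{\widetilde{\Psi}}=\ell_{s}^{*}/m$ and $m_{\widetilde{\Psi}}=m_{s}^{*}/\ell$, the classical duality for Simonenko indices (if $\alpha\leq t\widetilde{\Psi}'(t)/\widetilde{\Psi}(t)\leq \beta$ then $\beta'\leq t\Psi'(t)/\Psi(t)\leq\alpha'$, with $p'=p/(p-1)$) delivers the desired identities $\ell_{\Psi}=(m_{s}^{*}/\ell)'$ and $m_{\Psi}=(\ell_{s}^{*}/m)'$.

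The main technical obstacle is that $\Phi$ and $\Phi_{s}^{*}$ are only assumed convex, so their classical derivatives exist only almost everywhere; the chain-rule calculation must be read in terms of right-derivatives, and the convexity of $F$ needs an argument beyond the mere monotonicity of $F(t)/t$. The cleanest workaround is to recast the pointwise index bound as a scaling estimate $\xi_{F}^{-}(\lambda)F(t)\leq F(\lambda t)\leq \xi_{F}^{+}(\lambda)F(t)$ in the spirit of Proposition \ref{lema_naru}, which is equivalent to the index bound and directly encodes convexity modulo equivalence. Once this bookkeeping is in place, the remaining steps are immediate manipulations of the inequalities from $(\varphi_{2})$ together with the analogous inequalities for $\Phi_{s}^{*}$.
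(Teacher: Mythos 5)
Your proposal follows essentially the same route as the paper's proof: both compute $t\widetilde{\Psi}'(t)/\widetilde{\Psi}(t)$ via the chain rule with the substitution $r=\Phi^{-1}(t)$, split it into the product of the index ratio of $\Phi_s^*$ and the reciprocal index ratio of $\Phi$, multiply the two-sided bounds coming from $(\varphi_2)$ and its analogue for $\Phi_s^*$, and then pass from $\widetilde{\Psi}$ to $\Psi$ by the conjugation rule for Simonenko indices. You are somewhat more explicit about verifying the $N$-function axioms and about the a.e.-differentiability caveat, which the paper dismisses with ``it is not hard to show,'' but the core computation is identical.
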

	\begin{proof}
		It is not hard to show that $\Phi^* \circ \Phi^{-1}$ is a N-function by using hypotheses $(\phi_1) - (\phi_2)$. Consider the $N$-function $\tilde{\Psi}=\Phi_{s}^{*}\circ\Phi^{-1}$. Note that
		\[
		1=(\Phi\circ\Phi^{-1})^{\prime}=\Phi^{\prime}(\Phi^{-1}(t))(\Phi^{-1})^{\prime}(t),
		\]
		which implies that
		\[
		(\Phi^{-1}(t))^{\prime}=\dfrac{1}{\Phi^{\prime}(\Phi^{-1}(t))}.
		\]
		Hence, we have
		\[
		\tilde{\Psi}^{\prime}(t)=(\Phi_{s}^{*})^{\prime}(\Phi^{-1}(t))(\Phi^{-1}(t))^{\prime}=(\Phi_{s}^{*})^{\prime}(\Phi^{-1}(t))\dfrac{1}{\Phi^{\prime}(\Phi^{-1}(t))}.
		\]
		Thus, there holds
		\[
		\dfrac{\tilde{\Psi}^{\prime}(t)t}{\tilde{\Psi}(t)}=\dfrac{(\Phi^{*}_{s})^{\prime}(\Phi^{-1}(t))\Phi^{-1}(t)}{\Phi^{*}_{s}(\Phi^{-1}(t))}\dfrac{t}{\Phi^{\prime}(\Phi^{-1}(t))\Phi^{-1}(t)}.
		\]
		By considering $r=\Phi^{-1}(t)$ we obtain
		\[
		\dfrac{\tilde{\Psi}^{\prime}(t)t}{\tilde{\Psi}(t)}=\dfrac{(\Phi^{*}_{s})^{\prime}(r)r}{\Phi^{*}_{s}(r)}\dfrac{\Phi(r)}{\Phi^{\prime}(r)r}.
		\]
		By recalling that
		\[
		\ell_{s}^{\prime}\leq \dfrac{(\Phi^{*}_{s})^{\prime}(r)}{\Phi_{s}^{*}(r)}\leq m_{s}^{*} \quad \mbox{and} \quad \dfrac{1}{m}\leq \dfrac{\Phi(r)}{\Phi^{\prime}(r)r}\leq \dfrac{1}{\ell},
		\]
		we conclude that
		\[
		\ell_{\tilde{\Psi}}:=\dfrac{\ell_{s}^{*}}{m}\leq \dfrac{\tilde{\Psi}(t)t}{\tilde{\Psi}(t)}\leq\dfrac{m_{s}^{*}}{\ell}=:m_{\tilde{\Psi}}.
		\]
		As a consequence, $m_{\Psi} = \left(\ell_{s}^{*}/m\right)'$ and $\ell_{\Psi} = \left(m_{s}^{*}/\ell\right)'$. This ends the proof. 
	\end{proof}
	
	\section{Some technical results}
	
	In this section we shall prove some technical results related to the fractional Orlicz spaces. In the first step, we consider the following set 
	\begin{equation}
		T = \{\theta \in C^{0}(\mathbb{R}, \mathbb{R}); \theta' \,\, \mbox{exists in the set} \,\, \mathbb{R}\setminus \Gamma_{\theta} \,\, \mbox{where} \,\, \Gamma_{\theta} \,\, \mbox{has at most a finite number of elements}\}.
	\end{equation}
	
	\begin{lemma}\label{Japlication-1}
		Let $\Phi$ be an $N$-function such that $\Phi\in \Delta'$ and $0<s<1$. Define the auxiliary function
		$$\Gamma_\theta(t):=\int_0^t\Phi^{-1}(\theta'(\tau))d\tau, t \geq 0,$$
		where $\theta\in T$ is a nondecreasing function. Then there exists $C>0$ such that
		$$C\Phi\left(\frac{\Gamma_\theta(b)-\Gamma_\theta(a)}{|x-y|^s}\right)\leq \left[\frac{\theta(b)-\theta(a)}{|x-y|^s}\right]\varphi\left(\frac{b-a}{|x-y|^s}\right)\left(\frac{b-a}{|x-y|^s}\right),$$
		for all $a,b\in\mathbb{R},~x\neq y\in \mathbb{R}^N$.
	\end{lemma}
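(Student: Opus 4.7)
The plan is to prove the inequality by splitting the argument of $\Phi$ on the left as a product and distributing $\Phi$ across it via the $\Delta'$ condition. Before any computation, I would observe that since $t\mapsto t\varphi(t)$ is odd, the function $\varphi$ is even, so $\Phi(t)=\int_0^t \varphi(\tau)\tau\,d\tau$ is even as well. A short direct check shows that both sides of the claimed inequality are invariant under the swap $(a,b)\leftrightarrow(b,a)$, so I may assume $a\leq b$. Under this reduction $\theta(b)-\theta(a)\geq 0$ (by monotonicity of $\theta$) and $\Gamma_\theta(b)-\Gamma_\theta(a)\geq 0$ (since $\Phi^{-1}\geq 0$), so every quantity appearing is nonnegative.

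The first core step is the factorization
\[
\frac{\Gamma_\theta(b)-\Gamma_\theta(a)}{|x-y|^s}\;=\;\frac{\Gamma_\theta(b)-\Gamma_\theta(a)}{b-a}\cdot\frac{b-a}{|x-y|^s},
\]
to which I would apply the $\Delta'$ condition $\Phi(\sigma\tau)\leq C\,\Phi(\sigma)\Phi(\tau)$ and obtain
\[
\Phi\!\left(\frac{\Gamma_\theta(b)-\Gamma_\theta(a)}{|x-y|^s}\right)\;\leq\;C\,\Phi\!\left(\frac{\Gamma_\theta(b)-\Gamma_\theta(a)}{b-a}\right)\Phi\!\left(\frac{b-a}{|x-y|^s}\right).
\]
The first factor on the right is exactly where Jensen's inequality enters: since $\Phi$ is convex,
\[
\Phi\!\left(\frac{1}{b-a}\int_a^b \Phi^{-1}(\theta'(\tau))\,d\tau\right)\;\leq\;\frac{1}{b-a}\int_a^b \theta'(\tau)\,d\tau\;=\;\frac{\theta(b)-\theta(a)}{b-a}.
\]
The second factor is handled by the right inequality of $(\varphi_2)$, which yields $\Phi(t)\leq \ell^{-1}t^2\varphi(t)$, so
\[
\Phi\!\left(\frac{b-a}{|x-y|^s}\right)\;\leq\;\frac{1}{\ell}\,\varphi\!\left(\frac{b-a}{|x-y|^s}\right)\!\left(\frac{b-a}{|x-y|^s}\right)^{\!2}.
\]
Multiplying these three estimates, the factor $\frac{1}{b-a}$ cancels one power of $(b-a)$ from the square, and one directly obtains the right-hand side of the claim with constant $C':=\ell/C$.

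The step I expect to require most care is the Jensen argument together with the identity $\int_a^b \theta'(\tau)\,d\tau=\theta(b)-\theta(a)$, because $\theta'$ only exists off the finite exceptional set $\Gamma_\theta$. The plan is to partition $[a,b]$ using the finitely many points of $\Gamma_\theta\cap[a,b]$, apply the fundamental theorem of calculus on each closed subinterval on which $\theta$ is $C^1$, and glue the pieces together using continuity of $\theta$; integrability of $\Phi^{-1}(\theta'(\cdot))$ on $[a,b]$ is then immediate from continuity of $\Phi^{-1}$ with $\Phi^{-1}(0)=0$ and the local boundedness of $\theta'$ away from $\Gamma_\theta$. Once this technicality is handled, the chain of inequalities above is clean and yields the stated bound.
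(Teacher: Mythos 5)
Your proof is correct and follows essentially the same route as the paper: factor $\frac{\Gamma_\theta(b)-\Gamma_\theta(a)}{|x-y|^s}$ as a product, apply the $\Delta'$ condition, bound the resulting factor $\Phi\bigl(\frac{\Gamma_\theta(b)-\Gamma_\theta(a)}{b-a}\bigr)$ by Jensen's inequality, and finish with $\Phi(t)\leq\varphi(t)t^2$ from $(\varphi_2)$. The extra care you take in reducing to $a\leq b$ via evenness of $\Phi$ and in justifying $\int_a^b\theta'(\tau)\,d\tau=\theta(b)-\theta(a)$ for the piecewise-$C^1$ function $\theta$ is sound but not explicitly carried out in the paper's more compressed argument.
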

	\begin{proof}
		Since $\Phi\in \Delta'$, there exists $C>0$ such that
		\begin{eqnarray}
			C\frac{\Phi\left(\frac{\Gamma_\theta(b)-\Gamma_\theta(a)}{|x-y|^s}\right)}{\Phi\left(\frac{b-a}{|x-y|^s}\right)}&\leq&\Phi\left(\frac{\frac{\int_a^b\Phi^{-1}(\theta'(\tau))d\tau}{|x-y|^s}}{\frac{b-a}{|x-y|^s}}\right)\leq\Phi\left(\frac{\int_a^b\Phi^{-1}(\theta'(\tau))d\tau}{b-a}\right).\nonumber
		\end{eqnarray}
		Now, by using Jensen's inequality, we infer that
		$$C\frac{\Phi\left(\frac{\Gamma_\theta(b)-\Gamma_\theta(a)}{|x-y|^s}\right)}{\Phi\left(\frac{b-a}{|x-y|^s}\right)}\leq\frac{\displaystyle\int_a^b\Phi\left(\Phi^{-1}(\theta'(\tau))\right)d\tau}{b-a}=\frac{\theta(b)-\theta(a)}{b-a}.$$
		It follows from the inequality $\Phi(t)\leq \varphi(t)t^2$ that
		\begin{eqnarray}
			C\Phi\left(\frac{\Gamma_\theta(b)-\Gamma_\theta(a)}{|x-y|^s}\right)&\leq& \frac{\theta(b)-\theta(a)}{b-a}\Phi\left(\frac{b-a}{|x-y|^s}\right)\nonumber\\
			&\leq& \left[\frac{\theta(b)-\theta(a)}{|x-y|^s}\right]\varphi\left(\frac{b-a}{|x-y|^s}\right)\left(\frac{b-a}{|x-y|^s}\right).\nonumber
		\end{eqnarray}
		This ends the proof. 	
	\end{proof}
	In what follows, given any $N$-function $M$ satisfying
	$$1\leq \ell_M<\frac{tM'(t)}{M(t)}\leq m_M<\infty,~t>0,$$
	for some positive constants $\ell_M$ and $m_M$, we fix some notations
	$$\xi_{M}^{-}(t):=\min\{t^{\ell_M},t^{m_M}\}\qquad\mbox{and}\qquad \xi_{M}^{+}(t):=\max\{t^{\ell_M},t^{m_M}\}.$$
	Moreover, for each $L,\beta>1$, we define
	$$\theta(t):=t\Phi\left(\frac{t_L^{\beta-1}}{A}\right)\qquad\mbox{and}\qquad \Gamma_{\theta}(t):=\int_0^t\Phi^{-1}(\theta'(\tau))d\tau$$
	where $t_L:=\min\{t,L \}$ and $A$ shall be defined later. The main idea here is to control the size of A in order to apply the Moser's iteration. As a first step, by using some fine estimates together with the claim rule, we obtain the following result:
	\begin{lemma}\label{estGamma}
		The function $\Gamma_{\theta}$ satisfies the following estimate
		$$\Gamma_{\theta}(t)\geq \frac{1}{\beta A}tt_L^{\beta-1}, t \geq 0.$$
	\end{lemma}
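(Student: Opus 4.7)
The plan is to estimate $\theta'(\tau)$ pointwise from below by $\Phi\!\left(\tau_L^{\beta-1}/A\right)$, then apply $\Phi^{-1}$, and finally integrate, distinguishing the regimes $t \leq L$ and $t > L$.

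First I would differentiate $\theta(\tau) = \tau\,\Phi\!\left(\tau_L^{\beta-1}/A\right)$ by cases. For $\tau \leq L$ one has $\tau_L = \tau$, so the product rule gives
\[
\theta'(\tau) \;=\; \Phi\!\left(\frac{\tau^{\beta-1}}{A}\right) \;+\; \tau\,\Phi'\!\left(\frac{\tau^{\beta-1}}{A}\right)\frac{(\beta-1)\tau^{\beta-2}}{A}.
\]
Since $\Phi' \geq 0$ (because $\Phi$ is nondecreasing) and $\beta > 1$, the second term is nonnegative, so
$\theta'(\tau) \geq \Phi(\tau_L^{\beta-1}/A)$. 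For $\tau > L$, $\theta(\tau)$ is linear in $\tau$ with slope $\Phi(L^{\beta-1}/A) = \Phi(\tau_L^{\beta-1}/A)$, so the same lower bound holds. Applying the increasing function $\Phi^{-1}$ yields
\[
\Phi^{-1}(\theta'(\tau)) \;\geq\; \frac{\tau_L^{\beta-1}}{A}, \qquad \tau \geq 0.
\]

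Next I would integrate. If $t \leq L$, then $t_L = t$ and
\[
\Gamma_\theta(t) \;\geq\; \int_0^t \frac{\tau^{\beta-1}}{A}\,d\tau \;=\; \frac{t^\beta}{\beta A} \;=\; \frac{1}{\beta A}\,t\,t_L^{\beta-1},
\]
which is exactly the claimed bound. If $t > L$, then $t_L = L$ and I split the integral at $L$:
\[
\Gamma_\theta(t) \;\geq\; \int_0^L \frac{\tau^{\beta-1}}{A}\,d\tau + \int_L^t \frac{L^{\beta-1}}{A}\,d\tau \;=\; \frac{L^{\beta-1}}{A}\left[t - \frac{\beta-1}{\beta}L\right].
\]
The desired inequality $\Gamma_\theta(t) \geq \frac{1}{\beta A}\,t\,L^{\beta-1}$ then reduces to
$t - \frac{\beta-1}{\beta}L \geq \frac{t}{\beta}$, i.e., $\frac{\beta-1}{\beta}t \geq \frac{\beta-1}{\beta}L$, which holds since $t > L$ and $\beta > 1$.

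There is no real obstacle: the only subtlety is handling the truncation at $\tau = L$ carefully so that the product-rule term one discards is genuinely nonnegative, and then checking that the linear piece $\int_L^t$ still combines with $\int_0^L$ to beat $\frac{t\,t_L^{\beta-1}}{\beta A}$. Both amount to elementary estimates and the case $t > L$ is the only one that is not immediate.
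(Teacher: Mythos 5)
Your proof is correct, and it follows essentially the same strategy as the paper: bound $\theta'(\tau)$ from below by $\Phi(\tau_L^{\beta-1}/A)$, apply $\Phi^{-1}$, and integrate. The only meaningful differences are cosmetic and actually in your favor. The paper invokes $(\varphi_2)$ to sharpen the lower bound to $\theta'(\tau)\geq\beta\Phi(\tau^{\beta-1}/A)$ for $\tau<L$, but that extra factor $\beta$ is not needed for the stated conclusion; you correctly observe that discarding the nonnegative product-rule term already suffices. More importantly, your case $t>L$ is handled cleanly: you compute $\Gamma_\theta(t)\geq\frac{L^{\beta-1}}{A}\bigl[t-\frac{\beta-1}{\beta}L\bigr]$ by splitting at $\tau=L$, and then verify the reduction $t-\frac{\beta-1}{\beta}L\geq\frac{t}{\beta}$ algebraically. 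The paper instead asserts the stronger intermediate bound $\Gamma_\theta(t)\geq tL^{\beta-1}/A$ for $t>L$, which does not actually follow from the estimates given (the integral $\int_0^L$ piece only contributes $L^\beta/(\beta A)$, not $L^\beta/A$); this is harmless because the paper only uses the weaker $\frac{1}{\beta A}tt_L^{\beta-1}$ conclusion, which both derivations deliver, but your version of the argument is the tighter and more careful one.
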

	\begin{proof}
		It is not hard to see that 
		\begin{eqnarray}\label{der-theta}
			\theta'(t)=\Phi\left(\frac{t^{\beta-1}_L}{A}\right)+(\beta-1)\Phi'\left(\frac{t^{\beta-1}_L}{A}\right)\frac{tt_L^{\beta-2}}{A}(t_L)',~t\neq L.
		\end{eqnarray}
		It follows also from $(\varphi_2)$ that
		$$\theta'(t)\geq\beta\Phi\left(\frac{t^{\beta-1}_L}{A}\right),~\mbox{if}~t<L\qquad\mbox{and}\qquad \theta'(t)=\Phi\left(\frac{t^{\beta-1}_L}{A}\right),~\mbox{if}~t>L.$$
		More generally, we deduce that 
		\begin{equation}\label{des-theta}
			\Phi\left(\frac{t^{\beta-1}_L}{A}\right)\leq \theta'(t)\leq [1+(\beta-1)m]\Phi\left(\frac{L^{\beta-1}}{A}\right),~t\neq L.
		\end{equation}
		Consequently,
		$$\Gamma_{\theta}(u)\geq \frac{tt_L^{\beta-1}}{A},~\mbox{if}~t> L\qquad\mbox{and}\qquad \Gamma_{\theta}(u)\geq \frac{1}{A\beta}tt_L^{\beta-1},~\mbox{if}~t\leq L.$$
		As a consequence, we obtain that $\Gamma_{\theta}(u)\geq tt_L^{\beta-1}/(\beta A)$. This finishes the proof.	
	\end{proof}
	
	\begin{lemma}\label{psi-phi}
		Assume that $\widetilde{\Psi}\circ\Phi\prec\Phi_{s}^*$. Let $C^* > 0$ be fixed in such way that
		\begin{eqnarray}\label{emb-optimal}
			\|v\|_{\widetilde{\Psi}\circ\Phi}\leq C^*\|v\|_{\Phi_{s}^*},~\forall v\in W_0^{s,\Phi}(\Omega).
		\end{eqnarray}
		Then
		\begin{eqnarray}\label{Luxnorm}
			\left\|\Phi\left(\frac{v}{C^*\|v\|_{\Phi_{s}^*}}\right)\right\|_{\widetilde{\Psi}}\leq 1,~\forall v\in W_0^{s,\Phi}(\Omega).
		\end{eqnarray}
	\end{lemma}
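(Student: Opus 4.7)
The plan is to reduce the inequality \eqref{Luxnorm} to a standard equivalence between the modular and the Luxemburg norm, after exploiting the positive homogeneity of the Luxemburg norm associated with the $N$-function $\widetilde{\Psi}\circ\Phi$. In a single line: if one normalizes $v$ by $C^*\|v\|_{\Phi^{*}_{s}}$, then the continuous embedding \eqref{emb-optimal} forces the $(\widetilde{\Psi}\circ\Phi)$-Luxemburg norm of the normalized function to be bounded by $1$, which in turn is equivalent to the statement that its modular is bounded by $1$, and this modular is exactly the integral appearing in \eqref{Luxnorm}.

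First I would fix an arbitrary $v \in W_{0}^{s,\Phi}(\Omega)\setminus\{0\}$ (the case $v=0$ being trivial) and set
\[
w := \frac{v}{C^{*}\|v\|_{\Phi^{*}_{s}}}.
\]
By the homogeneity of the Luxemburg norm associated with $\widetilde{\Psi}\circ\Phi$ (which is an $N$-function thanks to Proposition \ref{coro} and the hypothesis $\widetilde{\Psi}\circ\Phi\prec\Phi^{*}_{s}$, which guarantees that the composition is well-defined as an $N$-function for the embedding to make sense) combined with \eqref{emb-optimal}, we obtain
\[
\|w\|_{\widetilde{\Psi}\circ\Phi}\;=\;\frac{\|v\|_{\widetilde{\Psi}\circ\Phi}}{C^{*}\|v\|_{\Phi^{*}_{s}}}\;\leq\;1.
\]
The next step is to invoke the standard equivalence for Luxemburg norms on Orlicz spaces: for any $N$-function $M$ and any measurable $h$ one has $\|h\|_{M}\leq 1$ if and only if $\int_{\Omega}M(|h|)\,dx\leq 1$. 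Applying this with $M=\widetilde{\Psi}\circ\Phi$ and $h=w$ gives
\[
\int_{\Omega}\widetilde{\Psi}\bigl(\Phi(|w|)\bigr)\,dx\;=\;\int_{\Omega}(\widetilde{\Psi}\circ\Phi)(|w|)\,dx\;\leq\;1.
\]
Finally, applying the same equivalence in the reverse direction with the $N$-function $\widetilde{\Psi}$ and the function $\Phi(w)$ yields $\|\Phi(w)\|_{\widetilde{\Psi}}\leq 1$, which is exactly \eqref{Luxnorm}.

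The proof is essentially bookkeeping with definitions, so there is no serious obstacle; the one point that needs care is the use of the equivalence \quotes{$\|h\|_{M}\leq 1 \iff \int M(|h|)\,dx \leq 1$}, which is classical but requires that $M$ be an $N$-function in the sense of Definition \ref{Musielak function}. For $M=\widetilde{\Psi}\circ\Phi$ one should briefly note that the convexity, monotonicity, vanishing at $0$, and the correct limits at $0$ and $\infty$ of $\widetilde{\Psi}\circ\Phi$ follow from the corresponding properties of $\Phi$ and $\widetilde{\Psi}$ together with the $\Delta_{2}$-condition on $\widetilde{\Psi}$ (granted by $(\varphi_{1})$--$(\varphi_{2})$ applied to $\Psi$, cf.\ Proposition \ref{coro}); this justifies using the equivalence without any additional work and also legitimizes the homogeneity step above.
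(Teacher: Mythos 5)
Your proof is correct and follows essentially the same route as the paper's: the paper passes directly from the pointwise bound $|v|/(C^{*}\|v\|_{\Phi_{s}^{*}})\leq |v|/\|v\|_{\widetilde{\Psi}\circ\Phi}$ and monotonicity of $\widetilde{\Psi}\circ\Phi$ to the modular estimate, whereas you insert the intermediate step of computing $\|w\|_{\widetilde{\Psi}\circ\Phi}\leq 1$ via homogeneity and then converting to the modular, but both arguments hinge on the same norm--modular equivalence and are otherwise identical in substance.
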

	\begin{proof}
		Indeed, by using \eqref{emb-optimal}, we deduce that 
		\begin{eqnarray}
			\int_{\Omega}\widetilde{\Psi}\left(\Phi\left(\frac{v}{C^*\|v\|_{\Phi_{s}^{*}}}\right)\right)dx\leq \int_{\Omega}\widetilde{\Psi}\circ\Phi\left(\frac{v}{\|v\|_{\widetilde{\Psi}\circ\Phi}}\right)dx=1.\nonumber
		\end{eqnarray}
		Now, taking into account the Luxemburg's norm, we obtain that \eqref{Luxnorm} is now verified. This ends the proof.
	\end{proof}
	
	In the next result we shall use the Clarke's derivative. We refer the reader to \cite{chang} for more details around this subject. More specifically, we prove the following result
	\begin{lemma}\label{test}
		Let $u\in W_0^{s,\Phi}(\Omega)$ be fixed. Then the function $\theta(u)\in W_0^{s,\Phi}(\Omega)$.
	\end{lemma}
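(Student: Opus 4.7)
The plan is to reduce everything to the statement that $\theta$ is globally Lipschitz on the range of $u$ and then invoke the modular comparison of Proposition \ref{lema_naru}. There are three things to verify in order to conclude $\theta(u)\in W^{s,\Phi}_0(\Omega)$: that $\theta(u)$ vanishes in $\mathbb R^N\setminus\Omega$, that $\mathcal J_{\Phi}(\theta(u))<\infty$, and that $\mathcal J_{s,\Phi}(\theta(u))<\infty$. The first is immediate, because $u=0$ on $\Omega^c$ and $\theta(0)=0\cdot\Phi(0)=0$ (as $\beta>1$). The remaining two are pointwise modular estimates.

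First I would establish the Lipschitz bound. From the explicit formula \eqref{der-theta}, using $(\varphi_2)$ and $t_L\le L$, the two-sided control \eqref{des-theta} gives
$$0\le\theta'(t)\le [1+(\beta-1)m]\,\Phi\!\left(\frac{L^{\beta-1}}{A}\right)=:K_\theta<\infty,\qquad t\ne L.$$
Since $\theta\in T$ (so $\theta'$ exists off the finite set $\{L\}$) and $\theta$ is continuous and nondecreasing, Clarke's generalized gradient (see \cite{chang}) is contained in $[0,K_\theta]$, which yields $|\theta(a)-\theta(b)|\le K_\theta|a-b|$ for all admissible $a,b$. This is the only point where the lack of classical differentiability at $t=L$ intervenes, and it is exactly why Clarke's calculus is invoked.

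Second, I would translate this into a modular estimate. Lipschitz continuity plus $\theta(0)=0$ gives, for $x,y\in\mathbb R^N$,
$$|\theta(u(x))|\le K_\theta|u(x)|,\qquad |\theta(u(x))-\theta(u(y))|\le K_\theta|u(x)-u(y)|.$$
Combining with the monotonicity of $\Phi$ and inequality \eqref{des-real} of Proposition \ref{lema_naru} (applied with $t=K_\theta$), one obtains
$$\Phi\!\left(\frac{|\theta(u(x))-\theta(u(y))|}{|x-y|^s}\right)\le \xi_\Phi^+(K_\theta)\,\Phi\!\left(\frac{|u(x)-u(y)|}{|x-y|^s}\right),$$
and similarly $\Phi(|\theta(u(x))|)\le\xi_\Phi^+(K_\theta)\Phi(|u(x)|)$. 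Integrating these pointwise bounds yields
$$\mathcal J_{s,\Phi}(\theta(u))\le\xi_\Phi^+(K_\theta)\,\mathcal J_{s,\Phi}(u)<\infty,\qquad \mathcal J_{\Phi}(\theta(u))\le\xi_\Phi^+(K_\theta)\,\mathcal J_{\Phi}(u)<\infty,$$
both finite by hypothesis $u\in W^{s,\Phi}_0(\Omega)$. Together with the vanishing outside $\Omega$ noted above, this closes the argument.

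The main obstacle I anticipate is rigorously justifying the Lipschitz estimate through the corner at $t=L$: one has to check that Clarke's generalized chain rule applies to the composition $\theta\circ u$ with $u$ only measurable, so that the pointwise bound $|\theta(u(x))-\theta(u(y))|\le K_\theta|u(x)-u(y)|$ survives without a smoothness assumption on $u$. Once that step is in place, the rest is a straightforward modular domination using \eqref{des-real}.
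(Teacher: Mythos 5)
Your argument follows the paper's proof essentially step for step: the global Lipschitz bound $K_\theta$ on $\theta$ is extracted from \eqref{des-theta} together with Clarke's calculus (Lebourg's mean value theorem) to handle the corner at $t=L$, and the modular $\mathcal{J}_{s,\Phi}(\theta(u))$ is then dominated by $\xi_\Phi^+(K_\theta)\,\mathcal{J}_{s,\Phi}(u)$ via Proposition~\ref{lema_naru}, exactly as in the text. The obstacle you flag at the end is not actually one: no generalized chain rule for $\theta\circ u$ is needed, since the Lipschitz inequality $|\theta(a)-\theta(b)|\le K_\theta|a-b|$ is a statement about real numbers that one simply evaluates pointwise at $a=u(x)$, $b=u(y)$, so the mere measurability of $u$ is irrelevant.
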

	\begin{proof}
		Our goal is to use Lebourg's Mean Value Theorem \cite{chang}. Assuming that $t\neq L$, $\theta'(t)$ is given by \eqref{der-theta}. If $t=L$, the Clarke derivate of $\theta$ in $t=L$ is given by
		\begin{equation}\label{des-theta-2}
			\partial  \theta(L)=\left[\Phi\left(\frac{L^{\beta-1}}{A}\right),\Phi\left(\frac{L^{\beta-1}}{A}\right)+(\beta-1)\Phi'\left(\frac{L^{\beta-1}}{A}\right)\frac{L^{\beta-1}}{A}\right].
		\end{equation}
		Using \eqref{des-theta} and \eqref{des-theta-2}, we deduce that
		$$|\eta|\leq [1+(\beta-1)m ]\Phi\left(\frac{L^{\beta-1}}{A}\right),~\forall \eta\in \partial  \theta(t).$$
		It follows from Lebourg's Mean Value Theorem \cite{chang} and Proposition \ref{lema_naru} that
		\begin{eqnarray}\label{des-lebourg}
			|\theta(a)-\theta(b)|&\leq&\left|\sup_{t\in[0,1]}\partial\theta((1-t)a+tb)\right||a-b|\nonumber\\
			&\leq& [1+(\beta-1)m ]\Phi(1)\xi_{\Phi}^+\left(\frac{L^{\beta-1}}{A}\right)|a-b|,
		\end{eqnarray}
		holds true for each $a,b\in\mathbb{R}$. According to \eqref{des-lebourg} we also obtain
		$$\int_{\Omega}\int_{\Omega}\Phi\left(|D_s\theta(u)|\right)d\mu\leq \xi_{\Phi}^{+}\left([1+(\beta-1)m ]\Phi(1)\xi_{\Phi}^+\left(\frac{L^{\beta-1}}{A}\right)\right)\int_{\Omega}\int_{\Omega}\Phi\left(|D_su|\right)d\mu<\infty.$$
		This  finishes the proof.
	\end{proof}
	\section{Proof of the main results}
	
	\noindent{\bf Proof of Theorem \ref{B}:} According to Lemma \ref{test}, we take  $\theta(u)$ as a test function for our main problem \eqref{P}. As a product, we obtain
	\begin{eqnarray}\label{eq11}
		\int_{\R^N}\int_{\R^N}\varphi(|D_su|)D_suD_s(\theta(u))d\mu=\int_{\Omega}g(x,u) \theta(u)dx.
	\end{eqnarray}
	Now, using Proposition \ref{lema_naru} and Lemma \ref{Japlication-1}, we also infer that 
	\begin{eqnarray}\label{eq21}
		C&=&C\int_{\Omega}\int_{\Omega}\Phi\left(\frac{|D_s \Gamma_{\theta} (u)|}{[\Gamma_{\theta}(u)]_{s,\Phi}}\right)d\mu \leq\xi_{\Phi}^+\left(\frac{1}{[\Gamma_{\theta}(u)]_{s,\Phi}}\right)C\int_{\Omega}\int_{\Omega}\Phi\left(|D_s \Gamma_{\theta} (u)|\right)d\mu\nonumber\\
		&\leq& \xi_{\Phi}^+\left(\frac{1}{[\Gamma_{\theta}(u)]_{s,\Phi}}\right)\int_{\Omega}\int_{\Omega}\varphi(|D_su|)D_suD_s(\theta(u))d\mu,
	\end{eqnarray}
	where $D_s(\theta(u))=\frac{\theta(u(x))-\theta (u(y))}{|x-y|^s}$. Since $W_0^{s,\Phi}(\Omega)\hookrightarrow L_{\Phi_{s}^{*}}(\Omega)$, there exists $S_{s,\Phi}>0$ such that $S_{s,\Phi}\|v\|_{\Phi_{s}^*}\leq [v]_{s,\Phi},~v\in W_0^{s,\Phi}(\Omega)$. Using the last assertion together with \eqref{eq11} and \eqref{eq21} we obtain that 
	\begin{equation}\label{eq31}
		C\leq \xi_\Phi^+\left(\frac{S_{s,\Phi}}{\|\Gamma_{\theta}(u)\|_{\Phi_{s}^{*}}}\right)\int_{\Omega}g(x,u)\theta(u)dx.
	\end{equation}
	In view of Lemma \ref{estGamma} we deduce that 
	$\Gamma_{\theta}(\tau)\geq \frac{1}{A\beta}\tau\tau_L^{\beta-1}$, for $\tau\geq 0$. Thus, by using \eqref{eq31}, we obtain
	\begin{eqnarray}\label{des11a}
		C&\leq&\xi_\Phi^+\left(\frac{\beta AS_{s,\Phi}}{\|uu_L^{\beta-1}\|_{\Phi_{s}^{*}}}\right)\int_{\Omega}g(x,u)\theta(u)dx.
	\end{eqnarray}
	On the other hand, fixing $R>0$, it follows from \eqref{CCG} and Proposition \ref{lema_naru} that
	\begin{eqnarray}\label{desBK1}
		\int_{\Omega}g(x,u)\theta(u)dx&\leq&  \left(\int_{[|u|<1]}+\int_{[|u|\geq 1]}\right)a(x)\left[1+|u|^{\ell-1}\right]u\Phi\left(\frac{u_L^{\beta-1}}{A}\right)dx \nonumber\\
		&\leq& 2\|a\|_1\Phi(1)\xi_\Phi^+\left(\frac{1}{A}\right)+ \int_{[|u|\geq1]}a(x)\xi_\Phi^-(|u|)\Phi\left(\frac{u_L^{\beta-1}}{A}\right)dx \nonumber\\
		&\leq& \!\! \left(\int_{[|u|\geq1]\cap[a\leq R]}+\int_{[|u|\geq1]\cap[a> R]}\right)a(x)\Phi\left(\frac{uu_L^{\beta-1}}{A}\right)dx + 2\|a\|_1\Phi(1)\xi_\Phi^+\left(\frac{1}{A}\right).
	\end{eqnarray}
	Thanks to \eqref{des11a}, \eqref{desBK1} and using H\"{o}lder's inequality we obtain that
	\begin{eqnarray}\label{desBK3}
		C&\leq & \xi_\Phi^+\left(\frac{\beta AS_{s,\Phi}}{\|uu_L^{\beta-1}\|_{\Phi_{s}^{*}}}\right)\left[2\|a\|_1\Phi(1)\xi_\Phi^+\left(\frac{1}{A}\right)+R\int_{[|u|\geq1]}\Phi\left(\frac{uu_L^{\beta-1}}{A}\right)dx\right.\nonumber\\
		&+&\left.2\|a\chi_{[a>R]}\|_{\Psi}\left\|\Phi\left(\frac{uu_L^{\beta-1}}{A}\right)\right\|_{\widetilde{\Psi}} \right].
	\end{eqnarray}
	Under these conditions, we take $A=C^*\|u u_L^{\beta-1}\|_{\Phi_{s}^*}$. As a consequence, by using Lemma \ref{psi-phi} and Lemma \ref{lema_naru}, it follows from \eqref{desBK3} that
	\begin{eqnarray}\label{desBK4}
		C&\leq & \xi_\Phi^+\left(\frac{\beta C^*\|u u_L^{\beta-1}\|_{\Phi_{s}^*}S_{s,\Phi}}{\|uu_L^{\beta-1}\|_{\Phi_{s}^{*}}}\right)\left[2\|a\|_1\Phi(1)\xi_\Phi^+\left(\frac{1}{C^*\|u u_L^{\beta-1}\|_{\Phi_{s}^*}}\right)\right.\nonumber\\
		&+&\left.R\int_{[|u|\geq1]}\Phi\left(\frac{uu_L^{\beta-1}}{C^*\|u u_L^{\beta-1}\|_{\Phi_{s}^*}}\right)dx+2\|a\chi_{[a>R]}\|_{\Psi} \right]\nonumber\\
		&\leq&\xi_\Phi^+\left(S_{s,\Phi}C^*\right)\beta^m\left[2\|a\|_1\Phi(1)\xi_\Phi^+\left(\frac{1}{C^*\|u u_L^{\beta-1}\|_{\Phi_{s}^*}}\right)\right.\nonumber\\
		&+&\left.R\xi_\Phi^+\left(\frac{1}{C^*\|u u_L^{\beta-1}\|_{\Phi_{s}^*}}\right)\int_{[|u|\geq1]}\Phi\left(uu_L^{\beta-1}\right)dx+2\|a\chi_{[a>R]}\|_{\Psi} \right]\nonumber\\
		&\leq&\xi_\Phi^+\left(S_{s,\Phi}C^*\right)\beta^m\left[\frac{2\|a\|_1\Phi(1)}{\xi_\Phi^-\left(C^*\|u u_L^{\beta-1}\|_{\Phi_{s}^*}\right)}+2\|a\chi_{[a>R]}\|_{\Psi}\right.\nonumber\\
		&+&\left.R\frac{\Phi(1)}{\xi_\Phi^-\left(C^*\|u u_L^{\beta-1}\|_{\Phi_{s}^*}\right)}\int_{[|u|\geq1]}\left(uu_L^{\beta-1}\right)^mdx \right].
	\end{eqnarray}
	Furthermore, choosing $R=R(\beta)>0$ in such way that
	$$2\xi_\Phi^+\left( S_{s,\Phi}C^*\right)\beta^m\|a\chi_{[a> R]}\|_{\Psi} \leq \frac{C}{2}$$
	and using the embedding $L_{\Phi_{s}^*}(\Omega)\hookrightarrow L^{\ell_s^*}(\Omega)$ and $0\leq u_L\leq u$ which together with \eqref{desBK4} imply that 
	\begin{eqnarray}\label{desBK42}
		\xi_\Phi^-\left(\|u u_L^{\beta-1}\|_{\ell_s^*}\right)&\leq & D\beta^m\left[1+R\|u\|_{\beta m}^{\beta m}\right].
	\end{eqnarray}
	for some positive constant $D=D(\Phi,s,a, \Psi)$.
	
	Now, we shall consider $\beta=\beta_1:=\frac{\ell_s^*}{m}>1$. Thus,
	$$\xi_\Phi^-\left(\|u u_L^{\frac{\ell_s^*}{m}-1}\|_{\ell_s^*}\right)\leq  D\beta^m\left[1+R\|u\|_{\ell_s^*}^{\ell_s^*}\right].$$
	Taking the limit as $L\to \infty$ we obtain that  $u\in L^{\ell_s^* \beta_1}(\Omega)$. Here, it is important to point that $\frac{(\ell_s^*)^2}{m}=\ell_s^*\beta_1>\ell_s^*$. More generally, for each $k\geq 1$, we consider
	$$\beta_{k+1}m=\beta_k\ell_s^*, \quad \mbox{where } \beta_1=\frac{\ell_s^*}{m}.$$
	In this way, assuming that $\delta:=\frac{\ell_s^*}{m}$, we infer that $\beta_{k}=\delta^k$. In view of \eqref{desBK42} we mention that
	\begin{eqnarray}\label{desBK5}
		\xi_\Phi^-(\|u\|_{{\beta_{k+1}}\ell_s^{*}}^{\beta_{k+1}})\leq D\beta_{k+1}^m\left(1+R\|u\|_{\beta_k \ell^*_s}^{\beta_k \ell^*_s}\right).
	\end{eqnarray}
	It follows from \eqref{desBK5} that $u\in L^{\delta^k}(\Omega), \, \forall \, k\in\mathbb{N}$. Since $\beta_k\to\infty$ as $k\to \infty$, for each $p\in(1,\infty)$, there exists $k_0\in \mathbb{N}$ such that $p<\beta_{k_0}=\delta^{k_0}$. This implies that $u\in L^p(\Omega)$. This finishes the proof of Theorem \ref{B}-i).
	\vskip.3cm
	\noindent{\bf Proof of Theorem \ref{B}-ii):} Here, $\Phi$ is equivalent to $t\mapsto |t|^m$. In this case, $W_0^{s,\Phi}(\Omega)=W_0^{s,m}(\Omega)\hookrightarrow L^{m_s^*}(\Omega)$. Consequently, the inequality \eqref{desBK42} can be changed in the following form
	\begin{eqnarray}\label{desBK42ii}
		\xi_\Phi^-\left(\|u u_L^{\beta-1}\|_{m_s^*}\right)&\leq & D\beta^m\left[1+R\|u\|_{\beta m}^{\beta m}\right].
	\end{eqnarray}
	Moreover, we consider
	$$\beta_{k+1}m=\beta_km_s^*, \quad \mbox{where } \beta_1=\frac{m_s^*}{m}=\frac{N}{N-sm}.$$
	As a consequence, we deduce that
	\begin{eqnarray}\label{desBK5ii}
		\xi_\Phi^-(\|u\|_{{\beta_{k+1}}m_s^{*}}^{\beta_{k+1}})\leq D\beta_{k+1}^m\left(1+R\|u\|_{\beta_k m^*_s}^{\beta_k m^*_s}\right).
	\end{eqnarray}
	From now on, by using the same steps discussed in the proof of Theorem \ref{B}-i) we obtain that $u \in L^p(\Omega)$ for each $p \in (1, \infty)$. This ends the proof.
	\vskip.3cm
	\noindent{\bf Proof of Corollary \ref{C}:}
	Let $u \in  W_0^{s,\Phi}(\Omega)$ be any weak solution for Problem \eqref{P}.
	Firstly, we consider the following auxiliary function
	\begin{equation}\label{salva}
		a(x):=\frac{C[|u(x)|^{\ell-1}+|u(x)|^{\ell_s^*-(m - \ell)-1}]}{[1+|u(x)|^{\ell-1}]}, x \in \Omega.
	\end{equation}
	It is not hard to see that $\Phi_s^* \circ \Phi^{-1}$ is also a $N$-function. Now, consider $\Psi = (\Phi_s^* \circ \Phi^{-1})\widetilde{}$ which implies that $\widetilde{\Psi}\circ \Phi = \Phi_s^*$. In particular, we know that $m_{\Psi} = (\ell_s^*/m)'$ and $l_{\Psi} = (m_s^*/\ell)'$, see Proposition \ref{coro}. The main idea here is to apply Theorem \ref{B} which requires that the function $a \in L_{\Psi}(\Omega)$. In order to do that, by using \eqref{salva} and Proposition \ref{lema_naru}, we obtain that
	$$\int_{\Omega} \Psi(a) dx\leq C_1 + C_2 \int_{|u| \geq 1} |u|^{(\ell^*_s - m)(l_s^*/m)'}dx \leq C_1 + C_2\int_{\Omega} |u|^{\ell_s^*} dx <\infty$$
	holds for some constants $C_1 > 0$ and $C_2 > 0$. As a consequence, we deduce that 
	$$|g(x,t)| \leq b(x)[1 + |u(x)|^{l-1}], x \in \Omega, t \in \mathbb{R}$$
	holds true with $b \in L_{\Psi}(\Omega)$ due to the fact that $b(x) = C a(x), x \in \Omega$ for some $C > 0$. Hence, taking into account Theorem \ref{B}, the solution $u$ is in $L^{p}(\Omega)$  for each $p\in(1,\infty)$. This finishes the proof.

	\noindent{\bf Proof of Theorem \ref{BK}:} The proof here is similar to the proof of Theorem \ref{B}. In fact, by using \eqref{desBK3}, we obtain
	\begin{eqnarray}\label{desBK3b}
		C&\leq & \xi_\Phi^+\left(\beta C^*S_{s,\Phi}\right)\left[2\|a\|_1\Phi(1)\xi_\Phi^+\left(\frac{1}{C^*\|u u_L^{\beta-1}\|_{\Phi_{s}^*}}\right)\right.\nonumber\\
		&+&R\int_{[|u|\geq1]}\Phi\left(\frac{uu_L^{\beta-1}}{C^*\|u u_L^{\beta-1}\|_{\Phi_{s}^*}}\right)dx +\left.2\|a\chi_{[a>R]}\|_{\Psi}\left\|\Phi\left(\frac{uu_L^{\beta-1}}{C^*\|u u_L^{\beta-1}\|_{\Phi_{s}^*}}\right)\right\|_{\widetilde{\Psi}} \right].
	\end{eqnarray}
	Now, taking into account some differences comparing to the proof of Theorem \ref{B}, we shall give some details. It follows from \eqref{desBK3b} and Lemma \ref{psi-phi}, Proposition \ref{lema_naru} that
	\begin{eqnarray}\label{desBK4b}
		C&\leq & \xi_\Phi^+\left(\beta C^*S_{s,\Phi}\right)\left[\frac{2\|a\|_1\Phi(1)}{\xi_\Phi^-\left(C^*\|u u_L^{\beta-1}\|_{\Phi_{s}^*}\right)}\right. +\left.R\int_{[|u|\geq1]}\Phi\left(\frac{uu_L^{\beta-1}}{\|uu_L^{\beta-1}\|_{\Phi}}\frac{\|uu_L^{\beta-1}\|_{\Phi}}{C^*\|u u_L^{\beta-1}\|_{\Phi_{s}^*}}\right)dx+2\|a\chi_{[a>R]}\|_{\Psi}\right]\nonumber\\
		&\leq & \xi_\Phi^+\left(\beta C^*S_{s,\Phi}\right)\left[\frac{2\|a\|_1\Phi(1)}{\xi_\Phi^-\left(C^*\|u u_L^{\beta-1}\|_{\Phi_{s}^*}\right)}\right.\nonumber\\
		&+&\left.R\xi_\Phi^+\left(\frac{S_\Phi^*\|uu_L^{\beta-1}\|_{\Phi}}{C^*S_\Phi^*\|u u_L^{\beta-1}\|_{\Phi_{s}^*}}\right)\int_{[|u|\geq1]}\Phi\left(\frac{uu_L^{\beta-1}}{\|uu_L^{\beta-1}\|_{\Phi}}\right)dx+2\|a\chi_{[a>R]}\|_{\Psi}\right],\nonumber\\
	\end{eqnarray}
	where $\displaystyle S_\Phi^*:=\inf_{v\in L_{\Phi_{s}^*}(\Omega)\setminus\{0\}}\frac{\|v\|_{\Phi_{s}^*}}{\|v\|_{\Phi}}$. Under these conditions, we also deduce that 
	$$\frac{S_\Phi^*\|uu_L^{\beta-1}\|_{\Phi}}{\|u u_L^{\beta-1}\|_{\Phi_{s}^*}}\leq 1.$$
	Therefore, by using Proposition \ref{lema_naru} and \eqref{desBK4b}, we infer that 
	\begin{eqnarray}\label{desBK4c}
		C&\leq & \xi_\Phi^+\left(\beta C^*S_{s,\Phi}\right)\left[\frac{2\|a\|_1\Phi(1)}{\xi_\Phi^-(C^*)\xi_\Phi^-\left(\|u u_L^{\beta-1}\|_{\Phi_s^*}\right)}+2\|a\chi_{[a>R]}\|_{\Psi}\right.+\left.\frac{R}{\xi_\Phi^-\left(C^*S_\Phi^*\right)}\left(\frac{S_\Phi^*\|uu_L^{\beta-1}\|_{\Phi}}{\|u u_L^{\beta-1}\|_{\Phi_{s}^*}}\right)^\ell\right].
	\end{eqnarray}
	Here, it was used the fact that
	$$\int_{\Omega}\Phi\left(\frac{uu_L^{\beta-1}}{\|uu_L^{\beta-1}\|_{\Phi}}\right)dx=1.$$
	Now, assuming that $\|u u_L^{\beta-1}\|_{\Phi_{s}^*}\geq 1$, it follows from \eqref {desBK4c} that
	\begin{eqnarray}\label{desBK4d}
		C\|u u_L^{\beta-1}\|_{\Phi_{s}^*}^\ell&\leq & \xi_\Phi^+\left(\beta C^*S_{s,\Phi}\right)\left[\frac{2\|a\|_1\Phi(1)}{\xi_\Phi^-(C^*)}+\frac{R(S_\Phi^*)^\ell}{\xi_\Phi^-\left(C^*S_\Phi^*\right)}\|uu_L^{\beta-1}\|_{\Phi}^\ell\right.\nonumber\\
		&+&\left.2\|a\chi_{[a>R]}\|_{\Psi}\|u u_L^{\beta-1}\|_{\Phi_{s}^*}^\ell\right].
	\end{eqnarray}
	Under these conditions, choosing $R=R(\beta)>0$ in such way that
	$$\xi_\Phi^+\left( S_{s,\Phi}C^*\beta\right)\|a\chi_{[a> R]}\|_{\Psi} \leq \frac{C}{2}$$
	and taking into account \eqref{desBK4d} we deduce that
	\begin{eqnarray}\label{desBK4e}
		\|u u_L^{\beta-1}\|_{\Phi_{s}^*}^\ell&\leq &D\beta^m\left[1+R\|uu_L^{\beta-1}\|_{\Phi}^\ell\right],
	\end{eqnarray}
	where $D= \frac{2\xi_\Phi^+\left( C^*S_{s,\Phi}\right)}{C}\max\left\{\frac{2\|a\|_1\Phi(1)}{\xi_\Phi^-(C^*)},\frac{(S_\Phi^*)^\ell}{\xi_\Phi^-\left(C^*S_\Phi^*\right)}\right\}$.
	
	On the other hand, assuming that $\|u u_L^{\beta-1}\|_{\Phi_{s}^*}< 1$, it follows from \eqref {desBK4c} that
	\begin{eqnarray}\label{desBK4f}
		\|u u_L^{\beta-1}\|_{\Phi_{s}^*}^m&\leq & \xi_\Phi^+\left(\beta C^*S_{s,\Phi}\right)\left[\frac{2\|a\|_1\Phi(1)}{\xi_\Phi^-(C^*)}+2\|a\chi_{[a>R]}\|_{\Psi}\|u u_L^{\beta-1}\|_{\Phi_{s}^*}^{m}\right.\nonumber\\
		&+&\left.\frac{R(S_\Phi^*)^\ell}{\xi_\Phi^-\left(C^*S_\Phi^*\right)}\|uu_L^{\beta-1}\|_{\Phi}^\ell\|u u_L^{\beta-1}\|_{\Phi_{s}^*}^{m-\ell}\right].\nonumber\\
		&\leq& D\beta^m\left[1+R\|uu_L^{\beta-1}\|_{\Phi}^\ell\right].
	\end{eqnarray}
	According to \eqref{desBK4e} and \eqref{desBK4f} we also infer that 
	\begin{eqnarray}\label{desBK4g}
		\xi_\Phi^-\left(\|u u_L^{\beta-1}\|_{\Phi_{s}^*}\right)&\leq &D\beta^m\left[1+R\|uu_L^{\beta-1}\|_{\Phi}^\ell\right].
	\end{eqnarray}
	Furthermore, by using Theorem \ref{B}, we ensure that $u\in L^{p}(\Omega),~p\in[1,\infty)$. Recall also that $0\leq u_L\leq u$ holds true. As a consequence, we obtain that
	$uu_L^{\beta-1}\leq u^\beta\in L^{m_s^*}(\Omega)\hookrightarrow L_{\Phi_{s}^*}(\Omega)$. Therefore, we take the limit as $L$ goes to infinity in the expression given in \eqref{desBK4g} obtaining the following estimate
	\begin{eqnarray}\label{desBK4h}
		\xi_\Phi^-\left(\|u^\beta\|_{\Phi_{s}^*}\right)&\leq &D\beta^m\left[1+R\|u^\beta\|_{\Phi}^\ell\right].
	\end{eqnarray}
	Notice also that the embeddings $L_{\Phi_{s}^*}(\Omega)\hookrightarrow L_{\ell_s^*}(\Omega)$ and $L^m(\Omega)\hookrightarrow L_{\Phi}(\Omega)$ are essential in our arguments. In fact, using the embeddings listed just above, the estimate \eqref{desBK4h} can be rewritten in the following form
	\begin{eqnarray}\label{desBK4i}
		\xi_\Phi^-\left(\|u\|_{\beta\ell_s^*}^\beta\right)&\leq &\overline{D}\beta^m\left[1+R\|u\|_{m\beta}^{\ell\beta}\right],
	\end{eqnarray}
	for some $\overline{D}>0$. In general, for each $k\geq 1$, we consider
	$$\beta_{k+1}m=\beta_k\ell_s^*, \quad \mbox{where } \beta_1=\frac{\ell_s^*}{m}.$$
	It is not hard  to deduce that $\beta_k=\left(\ell_s^*/m\right)^k$. In what follows we consider  $F_{k+1}=\beta_{k+1}\ln(\|u\|_{\beta_{k+1}\ell_s^*})$.
	At this stage, we consider the following set
	$$N_+(u):=\{k\in\mathbb{N}|\|u\|_{m\beta_k}\geq 1\}.$$
	In light of \eqref{desBK4i} we also obtain
	\begin{eqnarray}\label{desprint21}
		F_{k+1} &\leq& \ln (\overline{D}\beta_{k+1}^m)^{\frac{1}{\alpha}}+\frac{1}{\alpha}\ln\left(1+R\|u\|_{m\beta_k}^{\ell \beta_k}\right)\leq \ln (2R(\overline{D} \beta_{k+1}^m)^{\frac{1}{\alpha}})+F_k\nonumber\\
		&\leq& \ln (2R(\overline{D} \beta_{k+1}^m)^{\frac{1}{\alpha}})+\beta_1F_k,~k\in N_+(u)
	\end{eqnarray}
	and
	\begin{eqnarray}\label{desprint21a}
		F_{k+1} &\leq&  \ln(2R(\overline{D} \beta_{k+1}^m)^{\frac{1}{\alpha}}), \quad k\in\mathbb{N}\setminus N_+(u),
	\end{eqnarray}
	where $\alpha\in\{\ell,m\}$. It follows from \eqref{desprint21} and \eqref{desprint21a} that
	\begin{equation}\label{desprint31}
		F_{k+1}\leq
		\left\{
		\begin{array}{ll}
			\lambda_k+\beta_1F_k, & \mbox{if }k\in N_+(u),\\
			\lambda_k, & \mbox{if }k\in \mathbb{N}\setminus N_+(u),
		\end{array}
		\right.
	\end{equation}
	where $\lambda_k=\ln(2R(\overline{D} \beta_{k+1}^m)^{\frac{1}{\alpha}})$. Furthermore, we take $d=\ln(2R\overline{D}^\frac{1}{\alpha})$ and $\hat{d}=\frac{m}{\alpha}\ln\left(\ell_s^m/m\right)$, where $\alpha\in\{\ell,m\}$. As a consequence, we obtain
	\begin{equation}\label{desprint41}
		\frac{\lambda_k}{\beta_1^k}\leq \frac{d}{\beta_1^k}+\hat{d}\frac{k}{\beta_1^k}.
	\end{equation}
	Now, by using an induction method, we infer that 
	$$F_{k}\leq \beta_1^{k-1}|F_1|+\lambda_{k-1}+\beta_1\lambda_{k-2}+\beta_1^2\lambda_{k-3}+...+\beta_1^{k-2}\lambda_1.$$
	Therefore, by using the last assertion together with \eqref{desprint41} we obtain
	\begin{eqnarray}
		\ln(\|u\|_{\ell_s^*\beta_{k}})=\frac{F_k}{\beta_k}&\leq& \frac{1}{\beta_1}\left(|F_1|+\sum_{j=1}^{k-1}\frac{\lambda_j}{\beta_1^j}\right)\leq \frac{1}{\beta_1}\left(|F_1|+d\sum_{j=1}^{k-1}\frac{1}{\beta_1^j}+\hat{d}\sum_{j=1}^{k-1}\frac{j}{\beta_1^j}\right)\to d_0, \,\, \mbox{as} \,\, k \to \infty. \nonumber
	\end{eqnarray}
	Hence, we deduce that
	$$\|u\|_\infty\leq e^{d_0}.$$
	Therefore $u$ is in $L^{\infty}(\Omega)$. This finishes the proof. 
	\vskip.3cm
	\noindent{\bf Proof of Theorem \ref{A}-ii):} The proof here is quite similar to the proof of Theorem \ref{B}-ii). In fact, by using the fact that $\Phi$ is equivalent to $t\mapsto |t|^m$, we infer that $W_0^{s,\Phi}(\Omega)=W_0^{s,m}(\Omega)\hookrightarrow L^{m_s^*}(\Omega)$. Consequently, the inequality \eqref{desBK4i} can be rewritten in the following form
	\begin{eqnarray}\label{desBK42iia}
		\xi_\Phi^-\left(\|u u_L^{\beta-1}\|_{m_s^*}\right)&\leq & \overline{D}\beta^m\left[1+R\|u\|_{\beta m}^{\beta m}\right].
	\end{eqnarray}
	Recall also that 
	$$\beta_{k+1}m=\beta_km_s^*\,\,{where}\,\,\beta_1=\frac{m_s^*}{m}=\frac{N}{N-sm}.$$
	Putting all together we obtain
	\begin{eqnarray}\label{desBK5iia}
		\xi_\Phi^-(\|u\|_{{\beta_{k+1}}m_s^{*}}^{\beta_{k+1}})\leq \overline{D}\beta_{k+1}^m\left(1+R\|u\|_{\beta_k m^*_s}^{\beta_k m^*_s}\right).
	\end{eqnarray}
	At this stage, by using the same steps explored in the proof of Theorem \ref{A}-i) we obtain that $u \in L^{\infty}(\Omega)$. This ends the proof.
	
	
	\medskip


\begin{thebibliography}{99}
		
		\bibitem{Adams1} R. A. Adams, \emph{On the Orlicz-Sobolev imbedding theorem}, J. Funct. Anal. 24 (1977),
		241–257.
		
		\bibitem{alves} C. O. Alves, V. Ambrosio, T. Iserina, \emph{Existence, multiplicity and concentration
			for a class of fractional $p\&q$-Laplacian problems in $\mathbb{R}^N$}, Communications on Pure and Applied Analysis 18 (4), (2019), 2009-2045. 
		
		
		
		\bibitem{alves3} C. O. Alves, P. Garain, V. D. Radulescu, \emph{High purturbations of quasilinear problems with double criticality}, Mathematische Zeitschrift  (2021), https://doi.org/10.1007/s00209-021-02757-z
		
		\bibitem{ABS} E. Azroul, A. Benkirane, M.Srati, \emph{Existence of solutions for a nonlocal type problem in fractional Orlicz Sobolev spaces}, Advances in Operator Theory,  5 (2020), no. 4, 1350--1375.
		
		\bibitem{ABSS1} E. Azroul, A. Benkirane, M. Shimi, M.Srati, \emph{On a class of nonlocal problems in new fractional Musielak-Sobolev spaces}, Applicable Analysis, DOI: 10.1080/00036811.2020.1789601.
		
		\bibitem{ABSS2} E. Azroul, A. Benkirane, M. Shimi, M.Srati, \emph{Embedding and extension results in Fractional Musielak-Sobolev spaces}, Preprint.
		
		\bibitem{rad3} G. Bonanno, G. Molica Bisci, V. R\u{a}dulescu, \emph{Quasilinear elliptic non-homogeneous Dirichlet problems through Orlicz-Sobolev spaces}, Topol. J. Math. Nonlinear Anal. \textbf{75}, 4441--4456 (2012).
		
		
		\bibitem{BS20} S. Bahrouni and A. M. Salort, \emph{Neumann and Robin type boundary conditions in Fractional Orlicz-Sobolev spaces}, ESAIM Control Optim. Calc. Var., 27 (2021), suppl., Paper No. S15, 23 pp.
		
		\bibitem{Sabri1} S. Bahrouni, H. Ounaies, L. S. Tavares, \emph{Basic results of fractional Orlicz-Sobolev space and applications to nonlocal problems}, Topol. Methods Nonlinear Anal. 55 (2020), no. 2, 681--695.
		
		\bibitem{Sabri2} S. Bahrouni, H. Ounaies, \emph{Embedding theorems in the fractional Orlicz-Sobolev space and applications to nonlocal problems}, Discret and continuous Dynamical systems, 40 (2020), no. 5, 2917--2944.
		
		\bibitem{Brasco} L. Brascos, E. Parini, \emph{The second eigenvalue of the fractional p-Laplacian}, Adv. Calc. Var., 9 (2016), no. 4, 323--355.
		
		
		\bibitem{BK} H. Brezis, T. Kato, \emph{Remarks on the Schr$\ddot{o}$dinger operator with
			singular complex potentials}, J. Math. Pures et Appl. 58, 137--151 (1979).
		
		\bibitem{carvalho} M.L.M. Carvalho, E. D. Silva, J. V. A. Gonçalves, C. Goulart,  \emph{Critical quasilinear elliptic problems using concave-convex nonlinearities}, Ann. Mat. Pura Appl. 198(3), 693–726 (2019).
		
		\bibitem{guia} E. Di Nezza, G. Palatucci, E. Valdinoci, \emph{Hitchhiker’s guide to the fractional
			Sobolev spaces}, Bull. Sci. Math. 136 (2012), no. 5, 521–573.
		
		\bibitem{fu} M. Fucks, L. Gongbao, \emph{ $L^\infty$-bounds for elliptic equations on Orlicz-Sobolev spaces}, Arch. Math. (Basel) {\bf 72} (4) (1999), 293--297.
		
		\bibitem{Donaldson} D. T. Donaldson, \emph{Nonlinear elliptic boundary value problems in Orlicz-Sobolev
			spaces}, J. Diff. Eq. 10 (1971), 507–528.
		
		\bibitem{DT}  D. T. Donaldson,  N. S. Trudinger, \emph{Orlicz-Sobolev spaces and embedding theorems},
		J. Funct. Anal. 8 (1971), 52–75.
		
		\bibitem{CX} Luis A. Caffarelli, Xavier Cabr\'{e}, \emph{Fully nonlinear elliptic equations}, American Mathematical Society Colloquium Publications. American Mathematical Society, {\bf 43} Providence, RI, 1995.
		
		\bibitem{cianchi} A. Cianchi, \emph{Local boundedness of minimizers of anisotropic functionals}, Ann. Inst. Henri Poincar\'e {\bf 17}, 147--168 (2000).
		
		\bibitem{chang} K. C. Chang, \emph{Variational methods for nondifferentiable functionals and their applications to partial differential equations}, {\it J. Math. Anal. Appl.} {\bf 80} (1981), no. 1, 102--129.
		
		
		
		\bibitem{FBS} J. Fern\'andez Bonder, A. M. Salort, \emph{Fractional order Orlicz-Sobolev spaces}, Journal of Functional Analysis, {\bf 27} (2) (2019), 333-367.
		
		\bibitem {Fuk_2}  N. Fukagai, K. Narukawa, \emph{On the existence of multiple positive solutions of quasilinear elliptic eigenvalue problems}, Annali di Matematica, \textbf{186}, 539--564 (2007).
		
		
		\bibitem {Fuk_1}  N. Fukagai, M. Ito, K. Narukawa, \emph{Positive solutions of quasilinear elliptic equations with critical Orlicz-Sobolev nonlinearity on $\mathbb{R}^{N}$}, Funkcialaj Ekvacioj {\bf 49}, 235--267 (2006).
		
		
		\bibitem{gossez1} J. P. Gossez, \emph{Nonlinear elliptic boundary value problems for equations with raplidy (or slowly) incressing coefficients}. Trans. Amer. Math. Soc. {\bf 190}, 163--205 (1974).
		
		
		\bibitem{gossez2} J. P. Gossez, \emph{Orlicz-Sobolev spaces and nonlinear elliptic boundary value problems}, Nonlinear analysis, function spaces and applications, (Proc. Spring School, Horni Bradlo, 1978), Teubner, Leipzig, 59--94 (1979).
		
		\bibitem{moser} J. Moser, \emph{A new proof of De Giorgi’s theorem concerning the regularity problem for elliptic differential equations}, {\it Comm. Pure Appl. Math.}, {\bf 13} (1960), 457–468.
		
		\bibitem{Kaufmann} U. Kaufmann, J.D. Rossi, R. Vidal, \emph{Fractional
			Sobolev spaces with variable exponents and fractional $p(x)$-Laplacians},  Electron. J. Qual. Theory Differ. Equ. \textbf{76} (2017), 1-10.
		
		\bibitem{Kras} M. A. Krasnosel'ski\u{i},  I. A. B. Ruticki\u{i}, \emph{Convex functions and Orlicz spaces (translation)}, Noordhoff, Groningen, The Netherlands, (1961).
		
		\bibitem{CSA} E.D. Silva, M. L. Carvalho, J. C. Albuquerque,  \emph{Revised regularity results for quasilinear elliptic problems driven by the $\Phi$-Laplacian operator}, Manuscripta Mathematica \textbf{161}, 563--582 (2020).
		
		\bibitem{DeGiorgi} E. De Giorgi, \emph{Sulla differenziabilit\`{a} e l'analiticit\`{a} delle estremali degli integrali multipli regolari}, Mem. Accad. Sci. Torino Cl. Sci. Fis. Mat. Nat. (III) {\bf 125} (1957), 25–43.
		
		\bibitem{orlicz1} W. Orlicz, \emph{\"{U}ber konjugierte Exponentenfolgen}, Studia Math. 3, 200–211 (1931).
		
		\bibitem{KR} O. Kov\'{a}\^{c}ik, J. R\'{a}kosn\'{\i}k. \emph{On spaces $L^{p(x)}$ and $W_{1,p(x)}$}, Czechoslovak Math. J.,
		41(116):592–618, 1991.
		
		\bibitem{peral} I. Peral, \emph{Multiplicity of solutions for the $p$-Laplacian}, Second School of Nonlinear Functional Analysis and Applications to Differential Equations, International Center for Theoretical Physics Trieste, (1997).
		
		\bibitem{pucci2} P. Pucci, R. Servadei, \emph{Regularity of weak solutions of homogeneous or inhomogeneous quasilinear elliptic equations}, Indiana Univ. Math. J. {\bf 57} (7), 3329-–3363 (2008).
		
		\bibitem{Radulescu} V.D. Radulescu, D.D. Repov\v{s}, \emph{Partial Differential Equations with Variable Exponents},
		Monographs and Research Notes in Mathematics (CRC Press, Boca Raton, FL, 2015).
		Variational methods and qualitative analysis.
		
		
		\bibitem{DNFBS} P. De N\'apoli, J. Fern\'andez Bonder, A. M. Salort, \emph{A P{\'o}lya--Szeg{\"o} principle for general fractional Orlicz--Sobolev spaces}, Complex Variables and Elliptic Equations, (2020), 1-23.
		
		\bibitem{fang} Z. Tan, F. Fang, \emph{Orlicz-Sobolev versus H\"older local minimizer and multiplicity results for quasilinear elliptic equations}, J. Math. Anal. Appl. {\bf402}, 348-370 (2013).
		
		
	\end{thebibliography}
\end{document}